\crefname{algorithm}{Algorithm}{Algorithms}
\DeclareMathOperator*{\argmin}{argmin}
\newcommand{\TheTitle}{Spherical function regularization for parallel MRI reconstruction}
\newcommand{\TheAuthors}{Y. Zhu and T. Valkonen}
\title{\TheTitle}
\author{
  Yonggui Zhu\thanks{School of Science, Communication University of China, P. R. China
    (\email{ygzhu@cuc.edu.cn})},
  \and
  Tuomo Valkonen\thanks{Department of Mathematical Sciences, University of Liverpool, United Kingdom (\email{tuomo.valkonen@iki.fi}).}
}
\DeclareMathOperator{\diag}{diag}
\begin{document}

\maketitle

\begin{abstract}
    From the optimization point of view, a difficulty with parallel MRI with simultaneous coil sensitivity estimation is the multiplicative nature of the non-linear forward operator: the image being reconstructed and the coil sensitivities compete against each other, causing the optimization process to be very sensitive to small perturbations.
    This can, to some extent, be avoided by regularizing the unknown in a suitably ``orthogonal'' fashion. In this paper, we introduce such a regularization based on spherical function bases. To perform this regularization, we represent efficient recurrence formulas for spherical Bessel functions and associated Legendre functions. Numerically, we study the solution of the model with non-linear ADMM. We perform various numerical simulations to demonstrate the efficacy of the proposed model in parallel MRI reconstruction.

    \paragraph{\small Key words}  Parallel MRI, spherical function, regularization, coil sensitivity, ADMM
\end{abstract}

\section{Introduction}

\subsection{Parallel magnetic resonance imaging}

Parallel magnetic resonance imaging (p-MRI) increases acquisition speed by simultaneously using multiple radio frequency (RF) detector coils.
This helps avoid some of the time-consuming phase-encoding steps in the MRI process. Although less k-space data is received for each coil, this is compensated by data being available from multiple coils. The first approach to p-MRI was based on an arrangement of $J$ surface coils around the object for MR imaging, one for each $k$-space line to be acquired \cite{HuRa}.
The p-MRI method to achieve routine use was sensitivity encoding, SENSE \cite{PrWeScBo, Pr}.
In this approach, a discrete Fourier transform is used to reconstruct an aliased image for each element in the array. Then the full full-of-view image is
generated from the individual sets of images.

Generally, the MRI signal acquired by receiver coil $j$ is given by
\begin{equation}
s_j(\vec{k}) = \int u(\vec{x})c_j(\vec{x})exp(i\vec{k}\cdot\vec{x})d\vec{x}, \ \ \  j = 1, 2, \ldots, J.\label{eq:1}
\end{equation}
Here $u$ is the excited proton density function, $c_j$ the sensitivity profile of the $j$th coil at $\vec{x}$,
and $\vec{k}$ is the chosen $k$-space trajectory. In discrete form \eqref{eq:1} can be written
\begin{equation}
s_j(k_m, k_n) = \sum_{m=1}^{N}\sum_{n=1}^{N}u(m, n)c_{j}(m, n)exp(ik_{m}m)exp(ik_{n}n), \ \ \  j = 1, 2, \ldots, J.\label{eq:2}
\end{equation}
If the coil sensitivity profiles $c_{j}(m, n), \ \  j = 1, 2, \ldots, J,$ are known, the system of equations \eqref{eq:2} can be numerically inverted for $u$ with relative ease \cite{HoBrMaKy, UeHoBlFr}. An early direct method to invert \eqref{eq:2} is to decouple the system of equations in image space under regular sub-sampled pattern like SENSE \cite{PrWeScBo, PrWeBoBo, KyPaKaWeBaMuJo}. Another direct approach is to approximate a sparse inverse by using the coil data in k-space as in SMASH \cite{SoMa} and g-SMASH \cite{ByLaHa}. SMASH is a partial p-MRI method using multiple coils to speedup acquisition in the course of imaging. Whereas g-SMATH is generalized SMASH method that reconstructs image with the coil data in k-space. However, the MRI signal equation will be increasingly ill-conditioned when the acceleration factor becomes large. The acceleration factor is the ratio of the amount of k-space data required for a fully sampled image to the amount collected in accelerated acquisition. When ill-conditioned, the inversion of the linear system \eqref{eq:2} will lead to the amplification of noise present in the MRI signal $s_l$. Therefore  regularization methods are required to improve reconstruction quality. Historically employed regularization methods include the truncated singular value decomposition (TSVD) and damped least-squares (DLS) \cite{LaNu}.

If the coil sensitivities are not known, it is common to acquire sensitivity information by using a calibration step \cite{GrBrBlKaHeMuNiJeKiJa}. For example, the coil sensitivity profiles can be obtained directly from the reference lines in autocalibrating SENSE \cite{McYeOhPrSo}. The GRAPPA method \cite{GrJaHeNiJeWaKiHa} is the most widely used autocalibrating technique in the determination of coil sensitivities. The coils sensitivities are generally determined from the center of the k-space rather than using all available information. Due to small errors, this leads to residual aliasing artifacts in the reconstruction because. Nonlinear inversion with the joint the estimation of the coil sensitivities $c_j$ and the determination of the proton density image $u$, can improve reconstruction quality \cite{BaKa, YiSh, UeHoBlFr, UeKaFr}.

\subsection{Nonlinear inversion for p-MRI}

Parallel MR imaging can be formulated as a nonlinear inverse problem with a nonlinear forward operator $\mathfrak{F}$, which maps the proton density $u$ and the coil sensitivities $c = (c_j, c_2, \ldots, c_J)^T$ to the measured k-space  data $g$ as
\begin{equation}
\mathfrak{F}(u, c) := \big(P\mathcal{F}(u\cdot c_1), P\mathcal{F}(u\cdot c_2), \ldots, P\mathcal{F}(u\cdot c_J)\big)^T = g. \label{eq:3}
\end{equation}
Here $P$ is the binary sub-sampling mask, $\mathcal{F}$ is the discrete 2D Fourier transform, and $g = (g_1, g_2, \ldots, g_J)^T$
the acquired k-space measurements for $J$ receiver coils. As shown in \cite{UeHoBlFr, KnClUeSt}, the problem \eqref{eq:3} can
be solved by the iteratively regularized Gauss-Newton (IRGN) method \cite{BaKo, EnHaNe, BlNeSc, Ho}. The discrepancy principle is used to obtain a suitable level of regularization. In \cite{KnClBrUeSt} the authors furthermore expanded IRGN method with variational regularization terms to improve reconstruction quality. The method works as follows. Writing $v = (u, c_1, c_2, \ldots, c_J)^T$, and starting from an initial guess $v^0$, we solve on each step for $\triangle v$ from the linearised problem
\begin{equation}
\min_{\triangle v}~\frac{1}{2} \big\|\mathfrak{F}'(v^k)\triangle v + \mathfrak{F}(v^k) - g\big\|_2^2 + \frac{\alpha_k}{2}R_{c}(c^k + \triangle c) + \beta_kR_{u}(u^k + \triangle u)  \label{eq:irgn}.
\end{equation}
Then we update $v^{k+1} := v^k +\triangle v$.
Here $R_{c}(c)$  is a regularization functional for penalizing the high Fourier coefficients of the coils $c_j, j = 1, 2, \ldots, J$, and $R_{u}$ regularizes the image. The regularization parameters $\alpha_k$ and $\beta_k$ are updated by the formulas $\alpha_{k+1} := q_{\alpha}\alpha_k$ and $\beta_{k+1} := q_{\beta}\beta_k$ with $0 < q_{\alpha}, q_{\beta} < 1.$ More details can be found in \cite{KnClBrUeSt}.

There are many options for the regularisers $R_u$ and $R_c$ in the inverse problems literature. The most basic regularization is the simple $L^2$ penalty $R_{u}(u) = \frac{1}{2} \|u\|_2^2$. This is used in \cite{UeHoBlFr, KnClUeSt, KnClBrUeSt}. Another conventional choice for the image $u$ is $R_{u}(u) = TV(u)$, the Total Variation \cite{RuOsFa, Ch, LiKwBeWa, HoBrMaKy1, ZhSh, ZhShZhYu}. The are two common variants of the total variation, dependent on the choice of pointwise norm used.
Restricting ourselves to the finite-dimensional setting, with the two-norm we obtain the isotropic total variation
\begin{equation}
\mathrm{TV}_{I}(u) = \sum_{m=1}^{N}\sum_{n=1}^{N}\sqrt{|\nabla_1u(m, n)|^2+|\nabla_2u(m, n)|^2},  \label{eq:5}
\end{equation}
while with the $1$-norm we obtain the computationally easier but anisotropic total variation
\begin{equation}
\mathrm{TV}_{l_1}(u) = \sum_{m=1}^{N}\sum_{n=1}^{N}|\nabla_1u(m, n)|+|\nabla_2u(m, n)|.  \label{eq:6}
\end{equation}
In both cases we have used the forward-differences
{\small\begin{align}
\nabla_1 u(m, n) & = \begin{cases}
u(m+1, n) - u(m, n), & m < N\\
0, & \textrm{$m = N$,}
\end{cases}
&
\nabla_2 u(m, n) & = \begin{cases}
u(m, n+1) - u(m, n), & n < N\\
0 & \textrm{$n = N$.}
\end{cases}
\end{align}}

If the penalty parameter $\beta_k$ becomes large, TV regularization will generate staircasing artefacts. This can be avoided through the use of second-order Total Generalized Variation (TGV) \cite{BrKuPo,KnBrPoSt,VaBrKn}.

For the regularization term $R_{c}(c)$, one choice from \cite{BaKo, EnHaNe, BlNeSc, Ho} is to take $R_{c}(c) = \|w\cdot\mathcal{F}c\|_2^2$, where $w$ is an weighting operator that penalizes high Fourier coefficients. It is well known that coil sensitivities are generally rather smooth functions that vary only slowly and do not have sharp edges. This supports the use of quadratic regularization of the gradients the Tikhonov-regularized model in \cite{BeKnScVa}. Specifically, instead of the iteratively regularized IRGN approach \eqref{eq:irgn}, the authors directly solve for $\hat v = (\hat{u}, \hat{c_1}, \hat{c_2}, \ldots, \hat{c_J})$ the variational model
\begin{equation}
\min_{v=(u, c_1, c_2, \ldots, c_J)^T}~ \frac{1}{2}\sum_{j=1}^J\big\|P\mathcal{F}(G(v))_j - g_j\big\|_2^2 + \alpha_0R_u(u) + R_c(c),  \label{eq:variational}
\end{equation}
where $G(v) = (uc_1, uc_2, \ldots, uc_J)^T$, $R_{u}(u) = \mathrm{TV}_I(u)$, and $R_{c}(c) = \sum_{j = 1}^{J}\alpha_j\|\nabla c_j\|_{2,2}$.

\subsection{Contributions}

From the optimization point of view, a difficulty with both models \eqref{eq:irgn} and \eqref{eq:variational} is the multiplicative nature of $G(v)$. It can cause $u$ and $c_j$ to compete against each other. Therefore, besides physical considerations, one goal in the design of the regularisers $R_u$ and $R_c$ would be to try to make $u$ and $c_j$ in some vague sense ``orthogonal'', to avoid this competition. One approach to such vague orthogonality is to force $u$ piecewise constant and $c_j$ smooth. This is roughly performed by the TV and $H^1$ regularisers in \cite{BeKnScVa}. Another approach is for $u$ and $c_j$ have very different sparsity structure. This is what we will do in this paper.

Specifically, we will assume that the coil sensitivities can be sparsely represented in a spherical function basis $\{f_l^{+}\}$, which we introduce in detail in \cref{sec:sph}. Then, with $c_j=\sum_{l=1}^L a_l^{(j)} f_l^+$, we will in the variational model \eqref{eq:variational} promote sparsity by taking
\begin{equation}
    \label{eq:reg}
    R_c(c) = \alpha R_a(a) \quad\text{for}\quad
    R_a(a) = \sum_{j=1}^J \sum_{l=1}^L |a_l^{(j)}|.
\end{equation}
Therefore, we consider the model
\begin{equation}
\min_{v=(u, a)}~ \frac{1}{2}\sum_{j=1}^J\big\|P\mathcal{F}(G(v))_j - g_j\big\|_2^2 + \alpha_0R_u(u) + \alpha R_a(a),  \label{eq:9}
\end{equation}
for an appropriate definition of $G$ that we provide in \cref{sec:new}.

In order to make this model practical, we propose in \cref{sec:the} an efficient approach to compute the spherical basis functions based on spherical Bessel functions of the first kind and spherical harmonics. For the computation of spherical Bessel functions, we develop recurrence formulas. Based on the recurrence formula, all other spherical Bessel functions are efficiently calculated via the first two Bessel functions. For the computation of the spherical harmonics, we also provide a means to efficiently compute the associated Legendre functions by establishing  in \cref{sec:the} a recurrence formula with only four terms.
In \cref{sec:new} we then present a numerical method for \eqref{eq:variational} with \eqref{eq:reg}, based on the alternating direction method of multipliers (ADMM), and study the practical reconstruction performance in \cref{sec:num}.

\section{Spherical basis function representation of coil sensitivities}
\label{sec:sph}

According to the principle of reciprocity \cite{Ho1, Zh, Ib, JiLiWeLiCr}, the coil sensitivity maps can be evaluated from transmit radio frequency field profiles $B_1^{+}$. We therefore start by briefly introducing the theory of $B_1^{+}$ fields. Let $\omega, \mu, \sigma$, and $\varepsilon$ denote the Larmour frequency, the magnetic permeability, the conductivity, and the dielectric permittivity of the material, respectively. The radio frequency (RF) field is denoted by $\vec{B}(\vec{r}) = (B_x(\vec{r}), B_y(\vec{r}), B_z(\vec{r}))^T$ with $\vec{B}(\vec{r}) \in \varmathbb{C}^3$ and $\vec{r} \in \varmathbb{R}^3$. In the positively rotating frame given
in \cite{Ho1, JiLiWeLiCr, SbHoLaLuVa}, the transmit RF field is
\begin{equation}
B_1^{+}(\vec{r}) \equiv \frac{{B_{x}(\vec{r}) + iB_{y}(\vec{r})}}{2}.  \label{eq:10}
\end{equation}
For the detailed introduction of positively rotating frame, see \cite{Ho1}.
These fields an be approximated \cite{De, KeCaRoWoWoBu, OcAt, WiBoPr} by
\begin{equation}
B_1^{+}(\vec{r}) \approx \sum_{l=1}^L a_lf_l^{+}(\vec{r}),  \label{eq:11}
\end{equation}
where $f_l^{+}(\vec{r})$ are the spherical basis functions, $L$ is a small natural number and $a_l$ are complex coefficients.
The overall magnetic field $\vec{B}$ can be reduced to the Helmholtz equation \cite{ChDK}
\begin{equation}
\nabla^2 \vec{B}(\vec{r}) + \zeta^2\vec{B}(\vec{r}) = 0,  \label{eq:12}
\quad{where}\quad \zeta^2 = \varepsilon \mu \omega^2 - i\sigma\omega\mu.
\end{equation}

In spherical coordinates $(\rho, \theta, \phi)$, the equation \eqref{eq:12} has the solution \cite{Jo}
\begin{align}
\label{eq:13}
B_x(\vec{r}) & = \sum_{n=0}^\infty\sum_{m=-n}^n \alpha_n^mf_n^m(\vec{r}),
&
B_y(\vec{r}) & = \sum_{n=0}^\infty\sum_{m=-n}^n \beta_n^mf_n^m(\vec{r}),
&
B_z(\vec{r}) & = \sum_{n=0}^\infty\sum_{m=-n}^n \gamma_n^mf_n^m(\vec{r}),
\end{align}
where $f_n^m$ are so-called spherical functions. They can be written
\begin{equation}
    \label{eq:spherical}
    f_n^m(\rho, \theta, \phi) \equiv  j_n(\zeta\rho)Y_n^m(\theta, \phi),
\end{equation}
where $j_n$ is the spherical Bessel function of the first kind of order $n$, and $Y_n^m$ is the spherical harmonic of order $n$ and degree $m$.
The spherical functions form a basis for the $B_1^{+}$ fields by setting
\begin{equation}
    \label{eq:spherical-lplus}
    f_l^{+}(\rho, \theta, \phi) = f_n^m(\rho, \theta, \phi)
    \quad\text{with}\quad
    l = n^2 +n + m + 1, \ |m| \leq n \text{ and }0 \leq n \leq \tilde{n}.
\end{equation}
When signals or objects of approximately spherical shape are considered, fast convergence is expected, so the complex coefficients $\alpha_n^m$,
$\beta_n^m$, and $\gamma_n^m$ in \eqref{eq:13} should be negligible for $n > \tilde{n}$ with $\tilde{n}$ being a small natural number. We can therefore also expect fast convergence for the spherical function approximation of the $B_1^{+}$ field, and by extension the coil sensitivities.

\section{Efficient computation of the spherical basis functions}
\label{sec:the}

Our task in the present section is develop efficient recurrence formulas for the computation of the spherical basis functions \eqref{eq:spherical}. As discussed, this will be based on formulas for the Bessel functions and spherical harmonics.

\subsection{A recurrence relation for the spherical Bessel functions}
Following \cite{ArWe}, we now develop a recurrence formula for the spherical Bessel functions $j_n$.
We start by recalling that the Bessel function of the first kind, for arbitrary order $\alpha \in \varmathbb{R}$, is defined as
\begin{equation}
  J_\alpha(x) = \sum_{s=0}^\infty{{(-1)^s} \over {s!\Gamma(s+\alpha+1)!}}\bigg(\frac{x}{2}\bigg)^{\alpha+2s}.  \label{eq:19}
\end{equation}
Since the $\Gamma$ satisfies $\Gamma(n)=(n-1)!$ for integral $n \ge 0$ we can in particular write
\begin{equation}
J_n(x) = \sum_{s=0}^\infty{{(-1)^s} \over {s!(n+s)!}}\bigg(\frac{x}{2}\bigg)^{n+2s}.  \label{eq:19}
\end{equation}
To compute $J_n$ for negative integers, we can use the relationship
\begin{equation}
J_{-n}(x) = (-1)^nJ_n(x).  \label{eq:22}
\end{equation}
In addition, we have the recurrence relationship \cite{ArWe}
\begin{equation}
J_{n-1}(x) + J_{n+1}(x) = {{2n} \over x}J_n(x).  \label{eq:23}
\end{equation}

We now finally define the spherical Bessel function
\begin{equation}
    \label{eq:24}
    j_n(x) = \sqrt{{\pi \over {2x}}}J_{n+1/2}(x).
\end{equation}
From the recurrence relation \eqref{eq:23} for the Bessel functions $J_n$, we then obtain
\begin{equation}
j_{n-1}(x)+j_{n+1}(x) = {{2n+1} \over x}j_n(x).
  \label{eq:32}
\end{equation}
This can more conveniently be rewritten
\begin{equation}
j_{n+1}(x) = {{2n+1} \over x}j_n(x) - j_{n-1}(x).
  \label{eq:33}
\end{equation}
This is a three-term recurrence relation, so if $j_0$ and $j_1$ are known, then any higher-order $j_n$ can be computed from \eqref{eq:33}.

To compute $j_0$, we recall that Legendre's duplication formula for the $\Gamma$ function states
\begin{equation}
\Gamma(1+z)\Gamma(z+\frac{1}{2}) = 2^{-2z}\sqrt{\pi}\Gamma(2z+1).  \label{eq:25}
\end{equation}
For integral $z$ therefore
\begin{equation}
z!(z+\frac{1}{2})! = 2^{-2z-1}\sqrt{\pi}(2z+1)!.  \label{eq:26}
\end{equation}
Consequently
\begin{equation}
j_n(x) = 2^n x^n\sum_{s=0}^\infty{{(-1)^s(s+n)!} \over {s!(2s+2n+1)!}}x^{2s}.  \label{eq:27}
\end{equation}
When $n = 0$, we find from \eqref{eq:27} that
\begin{equation}
j_0(x) = \sum_{s=0}^\infty{{(-1)^s} \over {(2s+1)!}}x^{2s} = {{\sin x} \over x}.  \label{eq:j0}
\end{equation}

To compute $j_1$, we set
\begin{align}
    n_n(x) & =  (-1)^{n+1}\sqrt{{\pi \over {2x}}}J_{-n-1/2}(x), \quad\text{and} &
    h_n^{(1)}(x) & = j_n(x)+in_n(x).
\end{align}
In \cite{ArWe} it is shown
\begin{equation}
h_n^{(1)}(x) = (-i)^{n+1}{{e^{ix} \over x}}\sum_{s=0}^n{{i^s} \over {s!(2x)^s}}{{(n+s)!} \over {(n-s)!}},
  \label{eq:29}
\end{equation}
which for $n=1$ gives
\begin{equation}
h_1^{(1)}(x) = e^{ix}\bigg(-{1 \over x}-{i \over x^2}\bigg).
  \label{eq:30}
\end{equation}
As the real part of $h_1^{(1)}(x)$, we then obtain for $j_1(x)$ the expression
\begin{equation}
j_1(x) = {{\sin x} \over {x^2}}-{{\cos x} \over x}.
  \label{eq:j1}
\end{equation}

Based on the recurrence \eqref{eq:33} and the expressions \eqref{eq:j0} and \eqref{eq:j1} for $j_0$ and $j_1$, we can now compute any higher-order $j_n$.
We list the first few in \cref{tab:table1}.

\begin{table}[tbp]
  \caption{The first spherical Bessel functions}
  \label{tab:table1}
  \centering
  \begin{tabular}{c|l} \hline
  $n$ & $j_n(x)$\\ \hline
  $n = 0$ & $j_0(x) = {{\sin x} \over x}$  \\
  $n = 1$ & $j_1(x) = -{{\cos x} \over x}+{{\sin x} \over {x^2}}$  \\
  $n = 2$ & $j_2(x) = \bigg(-{1 \over x}+{3 \over {x^3}}\bigg)\sin x-{3 \over x^2}\cos x$ \\
  $n = 3$ & $j_3(x) = \bigg(-{15 \over x^4}-{6 \over {x^2}}\bigg)\sin x+\bigg({1 \over x}-{15 \over {x^3}}\bigg)\cos x$ \\ \hline
  \end{tabular}
\end{table}

\subsection{Efficient computation of the associated Legendre functions}

The spherical harmonics $Y_n^m$ are defined in spherical coordinates as \cite{ArWe}
\begin{equation}
Y_n^m(\theta, \phi) \equiv (-1)^m \sqrt{{{2n+1} \over {4\pi}}{(n-m)! \over (n+m)!}}P_n^m(\cos \theta)e^{im\phi},
\qquad
0 \leq \theta \leq \pi, 0 \leq \phi \leq 2\pi,
  \label{eq:34}
\end{equation}
where the associated Legendre function
\begin{equation}
P_n^m(x) = (1-x^2)^{m/2}{d^m \over {dx^m}}P_n(x),
\qquad 0 \leq m \leq n,
  \label{eq:35}
\end{equation}
and $P_n(x)$ are the $n$th-order Legendre polynomials.
They are defined as
\begin{equation}
P_n(x) = \sum_{k=0}^{\lfloor n/2\rfloor}(-1)^k{{(2n-2k)!} \over {2^nk!(n-k)!(n-2k)!}}x^{n-2k}
  \label{eq:36}
\end{equation}
with $\lfloor n/2\rfloor = n/2$ for $n$ even, $(n-1)/2$ for $n$ odd.
In particular, it is easy to see that
\[
    P_0(x) = 1 \quad\text{and}\quad P_1(x) = x.
\]

For $-n \leq m < 0$, using Leibniz' differentiation formula, we can find that $P_n^m(x)$ and $P_n^{-m}(x)$ are related by \cite{ArWe}
\begin{equation}
P_n^{-m}(x) = (-1)^m{{(n-m)!} \over {(n+m)!}}P_n^m(x).
  \label{eq:42}
\end{equation}
For $|m| > n$, $P_n^m(x) = 0$.

From \eqref{eq:35}, we have
\begin{equation}
P_n^0(x) = P_n(x).
  \label{eq:43}
\end{equation}
Thus $P_0^0(x) = P_0(x) = 1$, $P_1^0(x) = P_1(x) = x$, $P_1^1(x) =(1-x^2)^{1/2}{d \over {dx}}P_1(x) = (1-x^2)^{1/2}$.
By \eqref{eq:42}, we obtain $P_1^{-1}(x) = -\frac{1}{2}(1-x^2)^{1/2}$.
For the sake of the convenience of developing the recurrence relation to find all the $P_n^m$ effectively, we put $P_1^{-1}, P_0^0, P_1^0, P_1^1$ together as
\begin{align}
\label{eq:44}
P_0^0(x) & = 1, &
P_1^{-1}(x) & = -\frac{1}{2}(1-x^2)^{1/2}, &
P_1^0(x) & =  x, &
P_1^1(x) & = (1-x^2)^{1/2}.
\end{align}

Let us define the polynomials $\mathcal{P}_{s+m}^m(x) = P_{s+m}^m(x)(1-x^2)^{-m/2}$, $m\geq 0$. Then the generating function \cite[(12.83)]{ArWe}
\begin{equation}
\bar{g}_m(x, t) \equiv {{(2m)!} \over {2^mm!(1-2tx+t^2)^{m+1/2}}} = \sum_{s=0}^\infty \mathcal{P}_{s+m}^m(x)t^s.
  \label{eq:45}
\end{equation}
Furthermore, following \cite[§12.5]{ArWe}, we have the recursion
\begin{equation}
\mathcal{P}_{s+m}^{m}(x) = 2x\mathcal{P}_{s+m-1}^{m}(x)-\mathcal{P}_{s+m-2}^{m}(x)+(2m-1)\mathcal{P}_{s+m-1}^{m-1}(x).
  \label{eq:47}
\end{equation}
Therefore
\begin{equation}
    \label{eq:psm-deriv}
    \begin{split}
     P_{s+m}^m(x)
     & = (1-x^2)^{m/2}\mathcal{P}_{s+m}^m(x)
     \\
     & = (1-x^2)^{m/2}\big(2x\mathcal{P}_{s+m-1}^m(x)-\mathcal{P}_{s+m-2}^m(x)+(2m+1)\mathcal{P}_{s+m-1}^{m-1}(x)\big)
     \\
     & =  2(1-x^2)^{m/2}x\mathcal{P}_{s+m-1}^m(x)-(1-x^2)^{m/2}\mathcal{P}_{s+m-2}^m(x)
     \\
     & \qquad +(2m-1)(1-x^2)^{m/2}\mathcal{P}_{s+m-1}^{m-1}(x)
     \\
     & =  2(1-x^2)^{m/2}x(1-x^2)^{-m/2}P_{s-1+m}^m(x)-(1-x^2)^{m/2}(1-x^2)^{-m/2}P_{s-2+m}^m(x)
     \\
     & \qquad +(2m-1)(1-x^2)^{m/2}(1-x^2)^{-(m-1)/2}P_{s+m-1}^{m-1}(x)
     \\
     & = 2xP_{s+m-1}^m(x)-P_{s+m-2}^m(x)+(2m-1)(1-x^2)^{1/2}P_{s+m-1}^{m-1}(x).
    \end{split}
\end{equation}
That is
\begin{equation}
    \label{eq:48}
    P_{s+m}^m(x) = 2xP_{s+m-1}^m(x)-P_{s+m-2}^m(x)+(2m-1)(1-x^2)^{1/2}P_{s+m-1}^{m-1}(x).
\end{equation}
Now, for $m \geq 0$, using \eqref{eq:48}, we can compute effectively all the $P_n^m(x)$ by starting with \eqref{eq:44}.
For $m < 0$, we can then use \eqref{eq:42}.
We list the first few Legendre functions are listed in \cref{tab:table2}.
Recalling \eqref{eq:34}, we are in particular interested int the case $x = \cos \theta$, which we also list.

\begin{table}[tbp]
  \caption{First few associated Legendre functions as functions of $x$ and of $x=\cos\theta$.}
  \label{tab:table2}
  \centering
  \small\setlength{\tabcolsep}{4pt}
  \begin{tabular}{lll|lll}
  $P_n^m$ & fn.~of $x$ & fn.~of $\theta$ &
  $P_n^m$ & fn.~of $x$ & fn.~of $\theta$ \\
  \hline
  $P_0^0(x)$ & 1 & 1  &
  $P_1^{-1}(x)$ & $-\frac{1}{2}(1-x^2)^{1/2}$ & $-\frac{1}{2}\sin \theta$  \\
  $P_1^0(x)$ & $x$ & $\cos \theta$ &
  $P_1^1(x)$ & $ (1-x^2)^{1/2}$ & $\sin \theta$ \\
  $P_2^{-2}(x)$ & $ {1 \over 8}(1-x^2)$ & ${1 \over 8}\sin^2 \theta$ &
  $P_2^{-1}(x)$ & $ -\frac{1}{2}x(1-x^2)^{1/2}$ & $-\frac{1}{2}\cos \theta \sin \theta$ \\
  $P_2^0(x)$ & ${3 \over 2}x^2 - \frac{1}{2}$ & ${3 \over 2}\cos^2 \theta-\frac{1}{2}$  &
  $P_2^1(x)$ & $3x(1-x^2)^{1/2}$ & $3\cos \theta\sin \theta$  \\
  $P_2^2(x)$ & $ 3(1-x^2)$ & $3\sin^2\theta$ &
  $P_3^{-3}(x)$ & $ -{1 \over 48}(1-x^2)^{3/2}$ & $-{1 \over 48}\sin^3\theta$ \\
  $P_3^{-2}(x)$ & $ {1 \over 8}x(1-x^2)$ & ${1 \over 8}\cos \theta\sin^2\theta$ &
  $P_3^{-1}(x)$ & $-{1 \over 8}(5x^2-1)(1-x^2)^{1/2}$ & ${1 \over 8}(5\cos^2\theta-1)\sin\theta$  \\
  $P_3^0(x)$ & $\frac{1}{2}x(5x^2-3)$ & $\frac{1}{2}\cos\theta(5\cos^2\theta-3)$ &
  $P_3^1(x)$ & $ {3 \over 2}(5x^2-1)(1-x^2)^{1/2}$ & ${3 \over 2}(5\cos^2\theta-1)\sin\theta$ \\
  $P_3^2(x)$ & $ 15x(1-x^2)$ & $15\cos\theta\sin^2\theta$ &
  $P_3^3(x)$ & $ 15(1-x^2)^{3/2}$ & $15\sin^3\theta$ \\  \hline
  \end{tabular}
\end{table}

Using the recurrence \eqref{eq:48} for $P_n^m(\cos \theta)$, and the explicit solutions in \cref{tab:table2}, we now easily find all the spherical harmonics $Y_n^m(\theta, \phi)$ by the formula \eqref{eq:34}.
\Cref{tab:table3} shows some of the low-order ones.

\begin{table}[tbp]
  \caption{Some low-order spherical harmonics}
  \label{tab:table3}
  \centering
  \begin{tabular}{ll} \hline
  $Y_0^0(\theta, \phi) = {1 \over {\sqrt{4\pi}}}$  &
  $Y_1^{-1}(\theta, \phi) = \sqrt{{3 \over {8\pi}}}\sin \theta e^{-i\phi}$  \\
  $Y_1^0(\theta, \phi) =  \sqrt{{3 \over {4\pi}}}\cos \theta $ &
  $Y_1^1(\theta, \phi) =  -\sqrt{{3 \over {8\pi}}}\sin \theta e^{i\phi}$ \\
  $Y_2^{-2}(\theta, \phi) =  \sqrt{{15 \over {32\pi}}}\sin^2 \theta e^{-2i\phi}$ &
  $Y_2^{-1}(\theta, \phi) =  \sqrt{{15 \over {8\pi}}}\sin \theta\cos \theta e^{-i\phi}$ \\
  $Y_2^0(\theta, \phi) = \sqrt{{5 \over {4\pi}}}\bigg({3 \over 2}\cos^2\theta-\frac{1}{2}\bigg)$  &
  $Y_2^1(\theta, \phi) = -\sqrt{{5 \over {24\pi}}}3\sin \theta\cos \theta e^{i\phi}$  \\
  $Y_2^2(\theta, \phi) =  \sqrt{{15 \over {32\pi}}}\sin^2 \theta e^{i2\phi}$ &
  $Y_3^{-3}(\theta, \phi) =  \sqrt{{35 \over {64\pi}}}\sin^3 \theta e^{-i3\phi}$ \\
  $Y_3^{-2}(\theta, \phi) =  \sqrt{{105 \over {32\pi}}}\cos \theta\sin^2 \theta e^{-i2\phi}$ &
  $Y_3^{-1}(\theta, \phi) = \sqrt{{21 \over {64\pi}}}(5\cos^2 \theta-1)\sin \theta e^{-i\phi}$  \\
  $Y_3^0(\theta, \phi) = \sqrt{{7 \over {16\pi}}}\cos \theta(5\cos^2\theta-3)$  &
  $Y_3^1(\theta, \phi) =  -\sqrt{{21 \over {64\pi}}}(5\cos^2\theta-1)\sin\theta e^{i\phi}$ \\
  $Y_3^2(\theta, \phi) =  \sqrt{{105 \over {32\pi}}}\cos \theta\sin^2\theta e^{i2\phi}$ &
  $Y_3^3(\theta, \phi) =  -\sqrt{{35 \over {64\pi}}}\sin^3\theta e^{i3\phi}$ \\  \hline
  \end{tabular}
\end{table}

\subsection{A method to compute the spherical basis functions}

We recall te presentation \label{eq:spherical-lplus} of $f_l^+$ and the presentation \eqref{eq:spherical} of $f_n^m$.
Because $l = n^2+n+m+1$ and $|m| \leq n$, we find that $l$ is bounded by $l_{\max}^{(n)} = (n+1)^2$ for $n$. From $f_l^{+}(\rho, \theta, \phi) = f_n^m(\rho, \theta, \phi)$, we easily get for the low-order functions the relantionships in \cref{tab:table4}. Using $f_n^m(\rho, \theta, \phi) = j_n(\zeta \rho)Y_n^m(\theta, \phi)$, we then obtain the basis functions $f_l^{+}$.
\Cref{tab:table5} lists some $f_l^{+}$ corresponding to low-order $n$.

\begin{table}[tbp]
  \caption{The relationship between $f_l^{+}$ and $f_n^m$}
  \label{tab:table4}
  \centering\small
  \begin{tabular}{c|c|c} \hline
  $n$ & $f_n^m$ & $f_l^{+}$ ($l_{\max}^{(n)})$\\ \hline
  $n = 0$ & $f_0^0$  & $f_1^{+} (l_{\max} = (0+1)^2 = 1)$ \\
  $n = 1$ & $f_1^{-1}, f_1^0, f_1^1$ & $f_2^{+}, f_3^{+}, f_4^{+} ~(l_{\max}^{(n)} = (1+1)^2 = 4)$\\
  $n = 2$ & $f_2^{-2}, f_2^{-1}, f_2^0, f_2^1, f_2^2$ & $f_5^{+}, f_6^{+}, f_7^{+}, f_8^{+}, f_9^{+} ~(l_{\max}^{(n)} = (2+1)^2 = 9)$\\
  $n = 3$ & $f_3^{-3}, f_3^{-2}, f_3^{-1}, f_3^0, f_3^1, f_3^2, f_3^3$ & $f_{10}^{+}, f_{11}^{+}, f_{12}^{+}, f_{13}^{+}, f_{14}^{+}, f_{15}^{+}, f_{16}^{+} ~(l_{\max}^{(n)} = (3+1)^2 = 16)$\\
  \vdots & \vdots & \vdots\\ \hline
  \end{tabular}
\end{table}

\begin{table}[tbp]
  \caption{The the basis functions $f_l^{+}$ corresponding to order $n$}
  \label{tab:table5}
  \centering
  \begin{tabular}{l|l} \hline
  $n$ &  $f_l^{+} $\\ \hline
  $n = 0$ & $f_1^{+} = j_0(\zeta\rho)Y_0^0(\theta, \phi)$   \\ \hline
  $n = 1$ & $f_2^{+} = j_1(\zeta\rho)Y_1^{-1}(\theta, \phi), f_3^{+} = j_1(\zeta\rho)Y_1^0(\theta, \phi), f_4^{+} = j_1(\zeta\rho)Y_1^1(\theta, \phi)$ \\ \hline
  $n = 2$ & $f_5^{+} = j_2(\zeta\rho)Y_2^{-2}(\theta, \phi), f_6^{+} = j_2(\zeta\rho)Y_2^{-1}(\theta, \phi), f_7^{+} = j_2(\zeta\rho)Y_2^0(\theta, \phi), $\\
   & $f_8^{+} = j_2(\zeta\rho)Y_2^1(\theta, \phi), f_9^{+} = j_2(\zeta\rho)Y_2^2(\theta, \phi)$ \\ \hline
  $n = 3$ & $f_{10}^{+} = j_3(\zeta\rho)Y_3^{-3}(\theta, \phi), f_{11}^{+} = j_3(\zeta\rho)Y_3^{-2}(\theta, \phi), f_{12}^{+} = j_3(\zeta\rho)Y_3^{-1}(\theta, \phi),$ \\ & $f_{13}^{+} = j_3(\zeta\rho)Y_3^0(\theta, \phi),$
    $f_{14}^{+} = j_3(\zeta\rho)Y_3^1(\theta, \phi), f_{15}^{+} = j_3(\zeta\rho)Y_3^2(\theta, \phi)$, \\ & $f_{16}^{+} = j_3(\zeta\rho)Y_3^3(\theta, \phi)$ \\ \hline
  \vdots & \vdots \\ \hline
  \end{tabular}
\end{table}

From the above analysis, we find that if $j_n$ and $P_n^m$ are given only for $n =0, 1$, then rather than directly evaluating the series \eqref{eq:27} and \eqref{eq:36}, we can quickly find all the basis functions $f_l^{+}$ from \eqref{eq:33} and \eqref{eq:48}.
Starting with $j_0(\zeta\rho) = {{{\sin (\zeta\rho)} \over {\zeta\rho}}}$, $j_1(\zeta\rho) = {{{\sin (\zeta\rho)} \over (\zeta\rho)^2}} - {{{\cos (\zeta\rho)} \over {\zeta\rho}}}$, $P_0^0(\cos \theta) =1$, $P_1^{-1}(\cos \theta) = - {\frac{1}{2}}\sin \theta$, $P_1^0(\cos \theta) = \cos \theta$, $P_1^1(\cos \theta) = \sin \theta$, we can compute all the $f_l^{+}$ with $|m| \leq n, l = 1, 2, \ldots, l_{\max}^{(n)}$.
The numerical computation is outlined in \cref{alg:buildtree}. In its practical application, the radius $\rho$, the polar angle $\theta$ and the azimuth angle $\phi$  are computed by Cartesian coordinates $(x, y, z)$, which are as follows:
$$\rho_{i, j} = \sqrt{x_i^2+y_j^2+z_0^2},$$
$$\theta_{i, j} = \arccos({z_0 \over \rho_{i, j}}),$$
$$\phi_{i, j} = \arctan({y_j \over x_i}),$$
where $z_0$ is fixed, $x_i, y_j$ are discrete results of $x, y$, $i, j = 1, 2, \ldots, N$.
Hence
\[
    \left(f_l^{+}\right)_{i, j} = j_n(\zeta\rho_{i, j})Y_n^m(\theta_{i, j}, \phi_{i, j}) \quad\text{with}\quad l = n^2 +n +m +1, |m| \leq n.
\]
The $B_1^{+}$ field approximation \eqref{eq:11} can be expressed in matrix-vector form $B_1^{+} \approx FA$, where for $\ell = l_{\max}^{(\tilde{n})}$ we have
\[
F =
\left(
\begin{smallmatrix}
({f_1^+})_{1,1} & ({f_2^+})_{1,1} & \cdots & ({f_{\ell}^+})_{1,1} \\
({f_1^+})_{2,1} & ({f_2^+})_{2,1} & \cdots & ({f_{\ell}^+})_{2,1}\\
\vdots & \vdots &\ddots & \vdots  \\
({f_1^+})_{N,1} & ({f_2^+})_{N,1} & \cdots & ({f_{\ell}^+})_{N,1}\\
({f_1^+})_{1,2} & ({f_2^+})_{1,2} & \cdots & ({f_{\ell}^+})_{1,2}\\
({f_1^+})_{2,2} & ({f_2^+})_{2,2} & \cdots & ({f_{\ell}^+})_{2,2}\\
\vdots & \vdots &\ddots & \vdots  \\
({f_1^+})_{N,2} & ({f_2^+})_{N,2} & \cdots & ({f_{\ell}^+})_{N,2}\\
\vdots & \vdots &\ddots & \vdots  \\
({f_1^+})_{1,N} & ({f_2^+})_{1,N} & \cdots & ({f_{\ell}^+})_{1,N}\\
({f_1^+})_{2,N} & ({f_2^+})_{2,N} & \cdots & ({f_{\ell}^+})_{2,N}\\
\vdots & \vdots &\ddots & \vdots  \\
({f_1^+})_{N,N} & ({f_2^+})_{N,N} & \cdots & ({f_{\ell}^+})_{N,N}
\end{smallmatrix}
\right),
\quad\text{and}\quad
A =
\begin{pmatrix}
a_1 \\
a_2 \\
\vdots \\
a_{l_{\max}^{(\tilde{n})}}
\end{pmatrix}.
\]

\begin{algorithm}
\caption{The computation for the  basis functions $f_l^{+}$}
\label{alg:buildtree}
\begin{algorithmic}
\STATE{\textbf{Given} $\omega, \mu, \sigma$, $\varepsilon$, $\rho, \theta$, $\phi$ and $\tilde{n}$.}
\STATE{\textbf{Calculate} $\zeta = \sqrt{\varepsilon\mu\omega^2-i\sigma\omega\mu}$.}
\STATE{\textbf{Initialize} $j_0(\zeta\rho) = {{\sin(\zeta\rho)} \over {\zeta\rho}}$, $j_1(\zeta\rho) = {{{\sin (\zeta\rho)} \over (\zeta\rho)^2}} - {{{\cos (\zeta\rho)} \over {\zeta\rho}}}$, }
\STATE{\qquad \qquad \quad \ $P_0^0(\cos \theta) =1$, $P_1^{-1}(\cos \theta) = - {\frac{1}{2}}\sin \theta$,}
\STATE{\qquad \quad \qquad \ $P_1^0(\cos \theta) = \cos \theta$, $P_1^1(\cos \theta) = \sin \theta$ }
\STATE{\qquad \qquad \quad \ $Y_0^0(\theta, \phi) ={1 \over {\sqrt{4\pi}}}$, $Y_1^{-1}(\theta, \phi) = \sqrt{{3 \over {8\pi}}}\sin \theta e^{-i\phi}$,}
\STATE{\qquad \quad \qquad \ $Y_1^0(\theta, \phi) = \sqrt{{3 \over {4\pi}}}\cos \theta$, $Y_1^1(\theta, \phi) = -\sqrt{{3 \over {8\pi}}}\sin \theta e^{i\phi}$. }
\STATE{\textbf{Compute} $f_1^{+} = j_0(\zeta\rho)Y_0^0(\theta, \phi)$, $f_2^{+} = j_1(\zeta\rho)Y_1^{-1}(\theta, \phi),$}
\STATE{\qquad \qquad \quad \ $f_3^{+} = j_1(\zeta\rho)Y_1^0(\theta, \phi), f_4^{+} = j_1(\zeta\rho)Y_1^1(\theta, \phi)$.}
\STATE{\textbf{For} $n = 2:\tilde{n}$}
\STATE{\qquad \qquad \quad \ \textbf{$\bullet$ calculate} $j_n(\zeta\rho)$ and $P_n^m(\cos \theta)$ using \eqref{eq:33} and \eqref{eq:48}, respectively.}
\STATE{\qquad \qquad \quad \ \textbf{$\bullet$ calculate} $Y_n^m(\theta, \phi)$ with \eqref{eq:34}}.
\STATE{\qquad \qquad \quad \ \textbf{$\bullet$ calculate} $f_l^{+} = j_n(\zeta\rho)Y_n^m(\theta, \phi)$ with $|m| \leq n$ and
$l=n^2+n+m+1$.}
\STATE{\textbf{End}}
\STATE{\textbf{Return} $\{f_l^{+} \mid l=1,2,\ldots,l_{\max}^{(\tilde n)}\}$.}
\end{algorithmic}
\end{algorithm}

\section{The new regularization model and its numerical realization}
\label{sec:new}

We now present in detail our spherical function based regularization model for p-MRI reconstruction, as well as a method for its numerical realization.

\subsection{Regularization by sparse presentation in spherical basis}

Replacing the coil sensitivities $c_j$ and their regularization $R_c$ in the model \eqref{eq:variational} by the spherical functions representation $\sum_{l=1}^{l_{\max}^{(\tilde{n})}} a_l^{(j)}{f_l^{+}}$, $j =1, 2, \ldots, J$, and $R_a(a) = \alpha\sum_{j=1}^J\sum_{l=1}^{l_{\max}^{(\tilde{n})}}|a_l^{(j)}|$, respectively,  we obtain the model \eqref{eq:9}, that is
\begin{equation}
\min_{v = (u; a)}
\frac{1}{2}\sum_{j=1}^J\big\|P\mathcal{F}(G(v))_j - g_j\big\|_2^2 + \alpha_0R_u(u) + \alpha R_a(a),  \label{eq:49}
\end{equation}
where
\begin{equation}
G(v) = \bigg(u\sum_{l=1}^{l_{\max}^{(\tilde{n})}} a_l^{(1)}{f_l^{+}}, u\sum_{l =1}^{l_{\max}^{(\tilde{n})}} a_l^{(2)}{f_l^{+}}, \ldots, u\sum_{l =1}^{l_{\max}^{(\tilde{n})}} a_l^{(J)}{f_l^{+}}\bigg)^T,
\quad
R_a(a) = \sum_{j=1}^J\sum_{l=1}^{l_{\max}^{(\tilde{n})}}\big|a_l^{(j)}\big|,
\label{eq:50}
\end{equation}
and
\begin{equation}
a = \bigg(a_1^{(1)}, a_2^{(1)}, \ldots, a_{l_{\max}^{(\tilde{n})}}^{(1)},
a_1^{(2)}, a_2^{(2)}, \ldots, a_{l_{\max}^{(\tilde{n})}}^{(2)}, \ldots, a_1^{(J)}, a_2^{(J)}, \ldots, a_{l_{\max}^{(\tilde{n})}}^{(J)}\bigg)^T
\in \varmathbb{C}^{Jl_{\max}^{(\tilde{n})}}
. \label{eq:51}
\end{equation}
For the image $u$, we use the same isotropic total variation regularization $R_u(u)=\mathrm{TV}_I(u)$ as in the model \eqref{eq:variational} from \cite{BeKnScVa}, while our $R_a$ yields a different regularization $R_c$ for the coil sensitivities.

The complex numbers $a_l^{(j)}$ are the coefficients corresponding to the spherical function representation. The spherical basis functions are smooth enough to mainly encode low-frequency information for low $l$, so should not pick up important image features. Apart from this, we will not need to store the coil sensitives, which are themselves relatively high-dimensional images, or differentiate and perform their updates in a numerical optimization algorithm, as would be the case with the mode \eqref{eq:9} in \cite{BeKnScVa}. Instead we work with the relatively low-dimensional $a$.
Thus the updated model can be expected to be effective for parallel MRI reconstruction. We will validate this with the numerical experiments in \cref{sec:num}, but now we need to construct the optimization algorithm to solve the model \eqref{eq:49}, that is \eqref{eq:9}.

\subsection{The ADMM for convex problems}

We now start building a method for solving variational problems of type \eqref{eq:9}, i.e., \eqref{eq:49}. Note that due to the structure of $G$, these problems are non-convex. We therefore follow the non-linear ADMM approach of \cite{BeKnScVa}, itself motivated by the non-linear primal--dual hybrid gradient method (modified; PDHGM) of \cite{Va1,tuomov-pdex2nlpdhgm}. Here we simplify the derivations from \cite{BeKnScVa} to our specific problem form, and squared Hilbert space distances in place of the general Bregman distances employed in \cite{BeKnScVa}.

To motivate the algorithm that we will use, we  start by considering convex problems
\begin{equation}
\inf_{v\in V} \{E(v) + F(Bv)\},  \label{eq:52}
\end{equation}
where $E: V \longrightarrow \  ]-\infty, +\infty]$ and $F: H\longrightarrow \ ]-\infty, +\infty]$ are two proper lower semi-continuous convex functions, $B$ is a linear operator from $V$ into $H$, and $V$ and $H$ are the real Hilbert spaces equipped with the inner products $\langle\cdot, \cdot\rangle_V$ and $\langle \cdot, \cdot\rangle_H$.
The augmented dual problem for \eqref{eq:52} is
\begin{equation}
\sup_{\mu} \varphi_{\delta}(\mu) := \inf_{v\in V}\bigg\{E(v)+\inf_{q \in H}\bigg(F(q) + \langle\mu, Bv-q\rangle_H+{{\delta \over 2}}\parallel Bv-q\parallel_H^2\bigg)\bigg\}.  \label{eq:54}
\end{equation}
where, $\delta >0$, and $\varphi_{\delta}: H \longrightarrow [-\infty, +\infty[$.
We can solve \eqref{eq:52} and the dual \eqref{eq:54} by finding on $V\times H\times H$ a saddle point of the augmented Lagrangian function defined by
\begin{equation}
\mathscr{L}_{\delta}(v, q, \mu) = F(q) + E(v) +\langle\mu, Bv-q\rangle_H + {{\delta} \over 2}\parallel Bv-q\parallel_H^2. \label{eq:55}
\end{equation}

\begin{theorem}\label{thm:bigthm}
  If $\{u, p, \lambda\}$ is a saddle-point of $\mathscr{L}_{\delta}(v, q, \mu)$ on $V \times H \times H$ for $\delta > 0$, then
  $u$ is a solution of \eqref{eq:52}, and we have $p = Bu$.
\end{theorem}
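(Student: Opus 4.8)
The plan is to unpack the two inequalities contained in the saddle-point property and read off both conclusions directly; no convexity or lower semicontinuity of $E$ and $F$ is actually needed for this direction. By definition, $\{u,p,\lambda\}$ being a saddle point of $\mathscr{L}_\delta$ on $V\times H\times H$ means
\[
\mathscr{L}_\delta(u,p,\mu)\ \le\ \mathscr{L}_\delta(u,p,\lambda)\ \le\ \mathscr{L}_\delta(v,q,\lambda)
\qquad\text{for all } v\in V,\ q\in H,\ \mu\in H .
\]

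First I would use the left-hand inequality to get $p=Bu$. By the definition \eqref{eq:55}, the map $\mu\mapsto\mathscr{L}_\delta(u,p,\mu)=F(p)+E(u)+\langle\mu,\,Bu-p\rangle_H+\tfrac{\delta}{2}\|Bu-p\|_H^2$ is affine in $\mu$, with linear part $\langle\mu,\,Bu-p\rangle_H$. If $Bu-p\neq 0$, then choosing $\mu=\lambda+t(Bu-p)$ and letting $t\to+\infty$ drives $\mathscr{L}_\delta(u,p,\mu)\to+\infty$, contradicting that $\mu=\lambda$ maximizes it over all of $H$. Hence $Bu-p=0$, i.e.\ $p=Bu$, which is the second claim.

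Next I would feed $p=Bu$ into the right-hand inequality. With $Bu-p=0$ the multiplier and penalty terms in $\mathscr{L}_\delta(u,p,\lambda)$ vanish, so $\mathscr{L}_\delta(u,p,\lambda)=E(u)+F(Bu)$. Given an arbitrary $v\in V$, I would take the particular competitor $q=Bv$ in the right-hand inequality; then again the last two terms of \eqref{eq:55} vanish and the inequality reduces to $E(u)+F(Bu)\le E(v)+F(Bv)$. Since $v\in V$ was arbitrary, $u$ attains the infimum in \eqref{eq:52}, so $u$ is a solution of \eqref{eq:52}.

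The argument is essentially bookkeeping, so there is no genuine obstacle. The only point demanding a little care is the unboundedness step that yields $p=Bu$: one must use that the supremum over $\mu$ in the saddle-point condition ranges over the whole Hilbert space $H$, not a bounded subset, so that an affine functional of $\mu$ can attain a maximum only when its linear part is identically zero. Everything else follows by specializing $q$ and $\mu$ and cancelling the terms that vanish once $Bu=p$.
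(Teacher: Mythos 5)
Your argument is correct, but it is not the route the paper takes: the paper offers no proof at all beyond citing a known result (``a direct consequence of Theorem 2.1 in Chapter III of \cite{GaD}''), whereas you verify the claim directly from the two saddle-point inequalities. Your unboundedness argument in $\mu$ (an affine functional on all of $H$ can be maximized only if its linear part $\langle\mu, Bu-p\rangle_H$ vanishes, hence $p=Bu$) and the subsequent specialization $q=Bv$ in the right-hand inequality are exactly the standard mechanics behind the cited theorem, and as you note they require neither convexity nor lower semicontinuity of $E$ and $F$, so your proof is in this respect more elementary and more general than an appeal to the augmented-Lagrangian duality framework. What the citation buys instead is brevity and the embedding of the statement into the classical theory (where the converse direction, existence of saddle points, and the link to the dual problem \eqref{eq:54} also live). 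One small point you should make explicit: the contradiction in the first step needs $\mathscr{L}_{\delta}(u,p,\lambda)$ to be finite, i.e.\ $E(u)<\infty$ and $F(p)<\infty$; this follows from the right-hand saddle inequality together with properness of $E$ and $F$ (choose $v$ and $q$ in their domains), since otherwise the left-hand inequality would read $+\infty\le+\infty$ and yield no information. With that one-line remark added, the proof is complete.
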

\begin{proof}
  The proof is a direct consequence of Theorem 2.1 in Chapter III \cite{GaD}.
\end{proof}
In view of \cref{thm:bigthm}, in order to calculate the saddle-points of $\mathscr{L}_{\delta}(v, q, \mu)$ on
$V \times H \times H$, we employ an algorithm of the Uzawa type:
given $\lambda^{0} \in H$, determine $\{u^{k+1}, p^{k+1}\}$ from
\begin{equation}
\min_{\{v, q\}\in {V\times H}} \  \mathscr{L}_{\delta}(v, q, \lambda^k),
\label{eq:57}
\end{equation}
and then update
\begin{equation}
\lambda^{k+1} = \lambda^k + \delta(Bu^{k+1} - p^{k+1}).
\label{eq:58}
\end{equation}
For more details we refer to \cite{ArHuUz}.
The direct solution to \eqref{eq:57} is due to the coupling of $v$ and $q$.
For this reason, we are led to the Alternating Direction Method of Multipliers (ADMM) algorithm introduced in \cite{ArHuUz} to solve \eqref{eq:57}.
This algorithm approximates the pair $(u^{k+1}, p^{k+1})$ for \eqref{eq:57} via decoupled minimization over $v$ and $q$ as
\begin{align}
\label{eq:59}
u^{k+1} & \in \mathop{\argmin}_{v\in V} \  \mathscr{L}_{\delta}(v, p^k, \lambda^k),
&
 p^{k+1} & \in \mathop{\argmin}_{q\in H} \ \mathscr{L}_{\delta}(u^{k+1}, q, \lambda^k).
\end{align}
The ADMM algorithm for \eqref{eq:52}, described by \eqref{eq:55}, \eqref{eq:58}, and \eqref{eq:59}, can thus be summarized
\begin{subequations}
\label{eq:admm}
\begin{align}
u^{k+1} & \in \mathop{\argmin}_{v\in V} \  F(p^k) + E(v) +\langle\lambda^k, Bv-p^k\rangle_H + {{\delta} \over 2}\parallel Bv-p^k\parallel_H^2,
\label{eq:61}
\\
 p^{k+1} & \in \mathop{\argmin}_{q\in H} \  F(q) + E(u^{k+1}) +\langle\lambda^k, Bu^k-q\rangle_H + {{\delta} \over 2}\parallel Bu^k-q\parallel_H^2,
\label{eq:62}
\\
\lambda^{k+1} & = \lambda^k + \delta(Bu^{k+1} - p^{k+1}).
\label{eq:63}
\end{align}
\end{subequations}

\subsection{Proximal minimization}

When $V$ and $H$ are $\varmathbb{R}^n$, the sub-problems \eqref{eq:61} and \eqref{eq:62} can be solved by the proximal minimization algorithm that we now describe.
Let us consider the convex problem
\begin{equation}
\min_{x\in {\varmathbb{R}^n}} W(x),
\label{eq:64}
\end{equation}
where $W: \varmathbb{R}^n \to ]-\infty, +\infty]$ is a proper, lower semi-continuous convex function.
The Moreau--Yosida envelope of $W(x)$ is defined as
\begin{equation}
W_{\rho}(x) = \inf_{y \in \varmathbb{R}^n} \bigg\{W(y) + {1 \over {2\rho}}\|x-y\|^2\bigg\}.
\label{eq:65}
\end{equation}
As proved in \cite{MoJJ}, $W_{\rho}(x)$ is convex and differentiable, and has the same set of minimizers, and the same optimal value, as $W$.
This leads to the proximal minimization algorithm proposed by Martinet in \cite{MaB}. Namely, we solve \eqref{eq:64} by iterating
\begin{equation}
x^{k+1} = \mathop{\argmin}_{x\in \varmathbb{R}^n} \bigg\{W(x) + {1 \over {2\rho_{k+1}}}\|x-x^{k}\|^2\bigg\},
\label{eq:66}
\end{equation}
where the initial point $\ x^0 \in \varmathbb{R}^n$, $\{\rho_k\}_{k=1}^\infty$ is a sequence of positive numbers.
The convergence of this algorithm has been proved by Rockafellar in \cite{RoR1, RoR2}. For more discussion on proximal methods, see \cite{LeB,CoWa,Va2}.

\subsection{Preconditioned proximal minimization}

Following the ideas given in \cite{ZhBuOs}, we can improve the performance of the proximal minimization method \eqref{eq:66} by preconditioning. Specifically, we pick some positive definite symmetric matrix $Q$, and replace \eqref{eq:66} by
\begin{equation}
x^{k+1} = \mathop{\argmin}_{x\in \varmathbb{R}^n} \bigg\{W(x) + \rho_{k+1}^{-1}\  \big\| x - x^{k} \big\|_Q^2\bigg\}.
\label{eq:67}
\end{equation}
Here we define $\parallel x - x^k \parallel_Q \  := \sqrt{\langle Q(x - x^k), x - x^k\rangle}$.
Thus, picking ${Q_v^k}$ and ${Q_q^k}$ positive semi-definite, and incorporating the corresponding Moreau--Yosida regularization into \eqref{eq:admm}, we obtain the preconditioned ADMM (cf.~\cite{esser2010general,zhang2011unified})
\begin{subequations}%
\label{eq:admm-precond}
\begin{align}
u^{k+1} & = \mathop{\argmin}_{v\in \varmathbb{R}^n}~ F(p^k) + E(v) +\langle\lambda^k, Bv-p^k\rangle + {{\delta} \over 2}\big\| Bv-p^k\big\|^2 + \rho_{k+1}^{-1}\  \big\| v - u^{k} \big\|_{Q_v^k}^2,
\label{eq:68}
\\
 p^{k+1} & = \mathop{\argmin}_{q\in \varmathbb{R}^n}~ F(q) + E(u^{k+1}) +\langle\lambda^k, Bu^{k+1}-q\rangle + {{\delta} \over 2}\big\| Bu^{k+1}-q\big\|^2 + \rho_{k+1}^{-1}\  \big\| q - p^{k} \big\|_{Q_q^k}^2,
\label{eq:69}
\\
\lambda^{k+1} & = \lambda^k + \delta(Bu^{k+1} - p^{k+1}).
\label{eq:70}
\end{align}
\end{subequations}

\subsection{A computational method for the proposed model}

In order to cast the model \eqref{eq:9} in the preconditioned ADMM framework, let us first study the augmented Lagrangian for \eqref{eq:9}.
For $v = (u; a)^T$, we define
\begin{equation}
B(v) :=
\begin{pmatrix}
G(v),
\nabla u,
a
\end{pmatrix},   \label{eq:71}
\end{equation}
where $G$ and $a$ are as in \eqref{eq:50} and \eqref{eq:51}, respectively.
We represent the image as a vector $u \in \varmathbb{R}^{N^2}$, and write $\nabla u = (\nabla_1 u; \nabla_2 u) \in \varmathbb{R}^{2N^2}$.
with $E = 0$ and
\[
    F(B(v)) =
    \frac{1}{2}\sum_{j=1}^J\big\|P\mathcal{F}(G(v))_j - g_j\big\|_2^2 + \alpha_0R_u(u) + \alpha R_a(a),
\]
the problem \eqref{eq:9} has the form \eqref{eq:52} except for $B$ being nonlinear operator.
It follows from the above analysis and the preceding sections that an augmented Lagrangian naturally associated with the problem
\eqref{eq:9} is given by
\begin{equation}
\mathscr{L}_{\delta}(v, q, \mu) = F(q)  +(\mu, B(v)-q) + {{\delta} \over 2}\big\| B(v)-q\big\|^2. \label{eq:74}
\end{equation}
By analogue, following \cite{BeKnScVa}, we extend the preconditioned ADMM \eqref{eq:admm-precond} to non-linear $B$ as
\begin{subequations}
\begin{align}
u^{k+1} & = \mathop{\argmin}_{v}~F(p^k) + \langle\lambda^k, B(v)-p^k\rangle + {{\delta} \over 2}\big\| B(v)-p^k\big\|^2 + \rho_{k+1}^{-1}\  \big\| v - u^{k} \big\|_{Q_v^k}^2,
\label{eq:75}
\\
 p^{k+1} & = \mathop{\argmin}_{q}~F(q)+\langle\lambda^k, B(u^{k+1})-q\rangle + {{\delta} \over 2}\big\| B(u^{k+1})-q\big\|^2 + \rho_{k+1}^{-1}\  \big\| q - p^{k} \big\|_{Q_q^k}^2,
\label{eq:76}
\\
\lambda^{k+1} & = \lambda^k + \delta(B(u^{k+1}) - p^{k+1}).
\label{eq:77}
\end{align}
\end{subequations}
For our specific problem, the minimizations are over $v \in \varmathbb{R}^{N^2+J*{l_{\max}^{(\tilde{n})}}}$, and $q \in \varmathbb{R}^{J*N^2+2*N^2+J*{l_{\max}^{(\tilde{n})}}}$.

To make the proximal minimizations \eqref{eq:75} and \eqref{eq:76} easier, we linearise the operator $B$.
Let $\tilde{F_1}(v) = B(v) - p^k$ and $\tilde{F_2}(q) = B(u^{k+1}) - q$.
Since these functions are smooth,
\begin{align}
\label{eq:79}
\tilde{F_1}(v) & \approx \tilde{F_1}(u^k) + \tilde{F_1}'(u^k)(v - u^k),
\quad\text{and}
&
\tilde{F_2}(q) & \approx \tilde{F_2}(p^k) + \tilde{F_2}'(p^k)(v - p^k),
\end{align}
where $\tilde{F_1}'(u^k)$ and $\tilde{F_2}'(p^k)$ are the Fr\'echet derivative of $\tilde{F_1}$ at $u^k$ and $\tilde{F_2}$ at $p^k$.
For the sake of clarity, we set $J_v^k :=\tilde{F_1}'(u^k)$, $J_q^k := \tilde{F_2}'(p^k)$, $r_v^k := \tilde{F_1}'(u^k)u^k -
\tilde{F_1}(u^k)$ and $r_q^k := \tilde{F_2}'(p^k)p^k -
\tilde{F_2}(p^k)$.
Using \eqref{eq:79}, \eqref{eq:75}, and \eqref{eq:76} are replaced by
\begin{subequations}
\begin{align}
u^{k+1} & = \mathop{\argmin}_{v} \langle\lambda^k, J_q^kv\rangle + {{\delta} \over 2}\big\| J_q^kv-r_v^k\big\|^2 + \rho_{k+1}^{-1}\  \big\| v - u^{k} \big\|_{Q_v^k}^2,
\label{eq:80}
\\
 p^{k+1} & = \mathop{\argmin}_{q}~ F(q)+\langle\lambda^k, J_q^kv\rangle + {{\delta} \over 2}\big\| J_q^kv-r_v^k\big\|^2 + \rho_{k+1}^{-1}\  \big\| q - p^{k} \big\|_{Q_q^k}^2,
\label{eq:81}
\end{align}
\end{subequations}
where still $v \in \varmathbb{R}^{N^2+J*{l_{\max}^{(\tilde{n})}}}$, and $q \in \varmathbb{R}^{J*N^2+2*N^2+J*{l_{\max}^{(\tilde{n})}}}$.

In order to simplify the linearisation \eqref{eq:80} and \eqref{eq:81},
 we will seek $Q_v^k$ and $Q_q^k$ by $J_v^k$ and $J_q^k$.
For $\tau_v^k\delta < {1 \over \|J_v^k\|^2}$ and $\tau_q^k\delta < {1 \over \|J_q^k\|^2}$, we specifically let $Q_v^k := \tau_v^kI - \delta {J_v^k}^*{J_v^k}$ and $Q_q^k := \tau_q^kI - \delta {J_q^k}^*{J_q^k}$.
We also set $\bar{\lambda}^k := 2\lambda^k - \lambda^{k-1}$ and $\rho_{k+1} = 2$. It follows easily from \eqref{eq:80} and \eqref{eq:81}
\begin{align}
\label{eq:85}
u^{k+1} & = u^k - \tau_v^k{J_v^k}^*{\bar{\lambda}}^k ,
&
\\
p^{k+1} & = \bigg(I + \tau_q^k\partial F\bigg)^{-1}\bigg(p^k - \tau_q^k{J_q^k}^*\bigg(\lambda^k + \delta\bigg(B(u^{k+1})-p^k\bigg)\bigg)\bigg).
\label{eq:86}
\end{align}
Finally, \eqref{eq:85},  \eqref{eq:86} and \eqref{eq:77} yields \cref{alg:buildtree1} for the solution of \eqref{eq:9}.
Its convergence is studied in \cite{BeKnScVa} based on the results of \cite{Va1}.

\begin{algorithm}
\caption{Linearised preconditioned ADMM for \eqref{eq:9}}
\label{alg:buildtree1}
\begin{algorithmic}
\STATE{\textbf{Initialization} $u^0$, $p^0$, $\lambda^0$ and $\delta$.}
\STATE{\textbf{Set} $\bar{\lambda}^0 = \mu^0$.}
\WHILE{"stopping criterion is not satisfied"}
\STATE{$J_v^k = \tilde{F_1}'(u^k)$}
\STATE{Choose $\tau_v^k$ such that $\tau_v^k\delta < {1 \over \|J_v^k\|^2}$}
\STATE{Compute $u^{k+1}$ by \eqref{eq:85}}
\STATE{Compute $J_q^k$ by $J_q^k = \tilde{F_2}'(p^k)$}
\STATE{Choose $\tau_q^k$ such that $\tau_q^k\delta < {1 \over \|J_q^k\|^2}$}
\STATE{Compute $p^{k+1}$ by \eqref{eq:86}}
\STATE{Compute $\lambda^{k+1}$ by \eqref{eq:77}}
\STATE{Compute $\bar{\lambda}^{k+1} = 2\lambda^{k+1} - \lambda^k$}
\ENDWHILE
\STATE{\textbf{Return} $u^k$, $p^k$, $\lambda^k$ and $\bar{\lambda}^k$}
\end{algorithmic}
\end{algorithm}

\section{Numerical experiments}
\label{sec:num}

We now study the reconstruction performance of \eqref{eq:9} in comparison to the model \eqref{eq:variational} from \cite{BeKnScVa}.

\subsection{Technical details}

In the numerical simulations, we introduce regularization parameters $\alpha_j$ into the fidelity term $\frac{1}{2}\sum_{j=1}^J\big\|P\mathcal{F}(G(v))_j - g_j\big\|_2^2$ in the model \eqref{eq:9}. With this, the objective functional becomes
\[
    \begin{split}
    F(Bv) & =
    \frac{1}{2}\sum_{j=1}^J\alpha_j\big\|P\mathcal{F}(G(v))_j - g_j\big\|_2^2 + \alpha_0R_u(u) + \alpha R_a(a)
    \\
    &
    = \frac{1}{2}\sum_{j=1}^J\alpha_j\big\|P\mathcal{F}(G(v))_j - g_j\big\|_2^2 + \alpha_0TV_I(u) +
     \alpha \sum_{j=1}^J\sum_{l=1}^{l_{\max}^{(\tilde{n})}}\big|a_l^{(j)}\big|.
    \end{split}
\]
We decompose $F(Bv)$ into
$F_j(v) = \frac{1}{2}\alpha_j\big\|P\mathcal{F}(G(v))_j- g_j\big\|_2^2$ for $j = 1, 2, \ldots, J$, as well as $F_{J+1}(u) = \alpha_0TV_I(u)$, and $F_{J+2}(a) = \alpha \sum_{j=1}^J\sum_{l=1}^{l_{\max}^{(\tilde{n})}}\big|a_l^{(j)}\big|$. That is $F = \sum_{j=1}^JF_j + F_{J+1} + F_{J+2} = \sum_{j=1}^{J+2}F_j$.
We compute the corresponding resolvents explicitly as
\begin{align}
\bigg(I + \tau_q^k\partial F_j\bigg)^{-1}(x) &
 = \mathcal{F}^{-1}\Bigg({{\mathcal{F}x_j + {\alpha_j \tau_q^k}P^Tg_j} \over {1+\alpha_j\tau_q^k \diag(P^TP)}}\Bigg),
\ j = 1, 2, \ldots, J,
\label{eq:88}
\\
\bigg(I + \tau_q^k\partial F_{J+1}\bigg)^{-1}(x) &
 = {{x_{J+1}} \over {TV_I\big(x_{J+1}\big)}}\max\Bigg(TV_I\big(x_{J+1}\big)-{\alpha_0\tau_q^k}, 0\Bigg),
\label{eq:89}
\\
\bigg(I + \tau_q^k\partial F_{J+1}\bigg)^{-1}(x) &
= {(x_{J+2})_i \over {\big|(x_{J+2})_i\big|}}\max\bigg(|(x_{J+2})_i|-{\alpha\tau_q^k}, 0\bigg),
\ i = 1, 2, \ldots, J*l_{\max}^{(\tilde{n})}.
\label{eq:90}
\end{align}

\subsection{Experimental setup}
\label{sec:experimental}

Our numerical experiments are based on the synthetic brain phantom from \cite{AuBrEvCo,BeKiZh}, depicted in \cref{fig:a} and of dimension $190 \times 190$.
It contains several tissues, such as cerebrospinal fluid (CSF), gray matter (GM), white matter (WM) and cortical bone.
In the numerical simulations, we set the number of coils $J = 8$. For the generation of $k$-space measurement data $g_j$, $j = 1, 2, \ldots, J$, we use the approach of \cite{BeKnScVa}. We generate 8 coil sensitivity maps, based on a measurement of a water bottle with an 8-channel head coil array. These measurements are in \cref{fig:2}. We then multiply the brain phantom with each of these coil sensitivity maps separately, and convert the result to k-space data with the Fourier transform. Then we apply the $25\%$ subsampling mask shown in \cref{fig:b}. Finally, we add Gaussian noise with standard deviation $\tilde{\sigma}$ to the sub-sampled data.

We also demonstrate the robustness of the proposed approach in the p-MRI reconstruction by perturbing the coil sensitivity maps obtained from the water bottle. This is done by adding the 1st spherical basis function multiplied by factor $\gamma = 4$ to the water bottle measurements. The resulting maps are shown in \cref{fig:8}.

In numerical experiments, for the number of spherical basis functions ``levels'', we choose either $\tilde{n} = 2$ or $\tilde{n} = 5$. So the number of spherical basis functions is either $l_{\max}^{(\tilde{n})} = 9$ or $l_{\max}^{(\tilde{n})} = 36$. 
In the Cartesian coordinate system, we set $z_0 =0.5$, and $x, y$ in MATLAB are discretised by 
$${{2*step*(1:190)} \over 190} - 10,$$
where $step = 10$.
We plot in \cref{fig:4} the first 36 spherical basis functions corresponding to $\tilde{n} = 5$. For $\tilde n=2$ only the 9 first are used.

\begin{figure}[tbp]
    \centering
    \subcaptionbox{Brain phantom}{\label{fig:a}\includegraphics[width=0.3\textwidth]{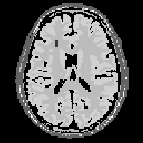}}
    \hfil%
    \subcaptionbox{$k$-space sampling mask}{\label{fig:b}\includegraphics[width=0.3\textwidth]{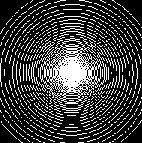}}
    \caption{ (a) shows the brain phantom described in \cref{sec:experimental}. (b) shows the spiral-shaped $25\%$ $k$-space sub-sampling mask.}
    \label{fig:1}
\end{figure}

\begin{figure}
  \centering
  {\includegraphics[width=0.15\textwidth]{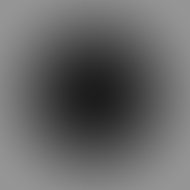}}
  {\includegraphics[width=0.15\textwidth]{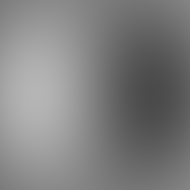}}
  {\includegraphics[width=0.15\textwidth]{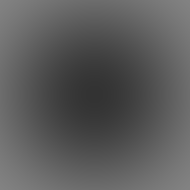}}
  {\includegraphics[width=0.15\textwidth]{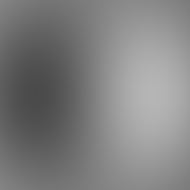}}
  {\includegraphics[width=0.15\textwidth]{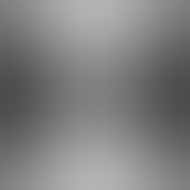}}
  {\includegraphics[width=0.15\textwidth]{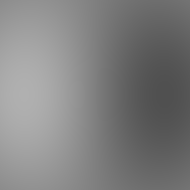}}\\
  {\includegraphics[width=0.15\textwidth]{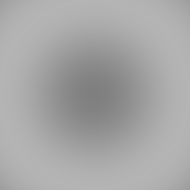}}
  {\includegraphics[width=0.15\textwidth]{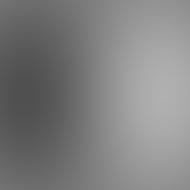}}
  {\includegraphics[width=0.15\textwidth]{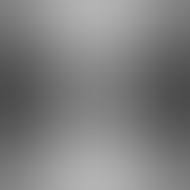}}
  {\includegraphics[width=0.15\textwidth]{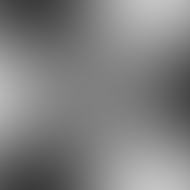}}
  {\includegraphics[width=0.15\textwidth]{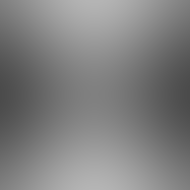}}
  {\includegraphics[width=0.15\textwidth]{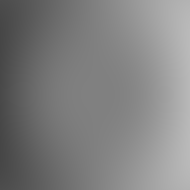}}\\
  {\includegraphics[width=0.15\textwidth]{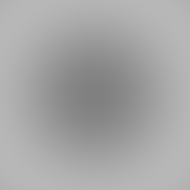}}
  {\includegraphics[width=0.15\textwidth]{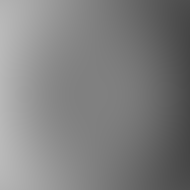}}
  {\includegraphics[width=0.15\textwidth]{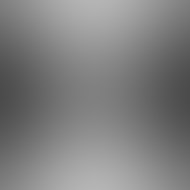}}
  {\includegraphics[width=0.15\textwidth]{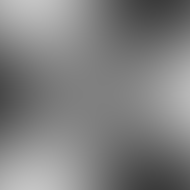}}
  {\includegraphics[width=0.15\textwidth]{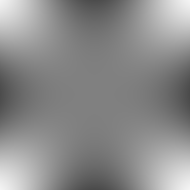}}
  {\includegraphics[width=0.15\textwidth]{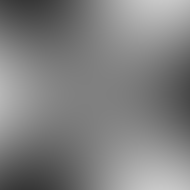}}\\
  {\includegraphics[width=0.15\textwidth]{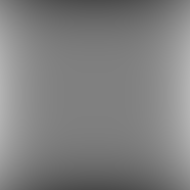}}
  {\includegraphics[width=0.15\textwidth]{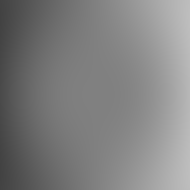}}
  {\includegraphics[width=0.15\textwidth]{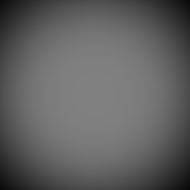}}
  {\includegraphics[width=0.15\textwidth]{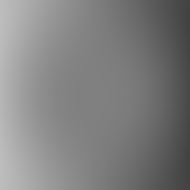}}
  {\includegraphics[width=0.15\textwidth]{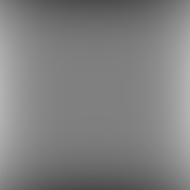}}
  {\includegraphics[width=0.15\textwidth]{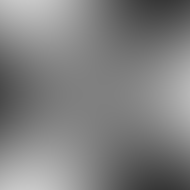}}\\
  {\includegraphics[width=0.15\textwidth]{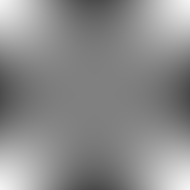}}
  {\includegraphics[width=0.15\textwidth]{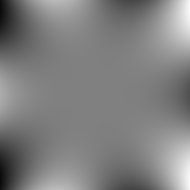}}
  {\includegraphics[width=0.15\textwidth]{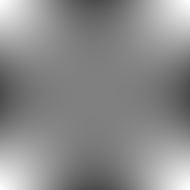}}
  {\includegraphics[width=0.15\textwidth]{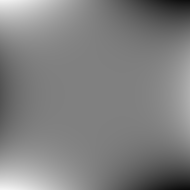}}
  {\includegraphics[width=0.15\textwidth]{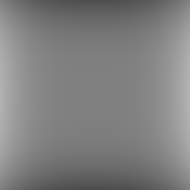}}
  {\includegraphics[width=0.15\textwidth]{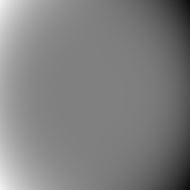}}\\
  {\includegraphics[width=0.15\textwidth]{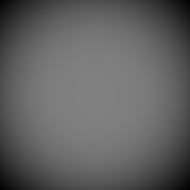}}
  {\includegraphics[width=0.15\textwidth]{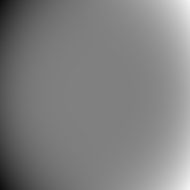}}
  {\includegraphics[width=0.15\textwidth]{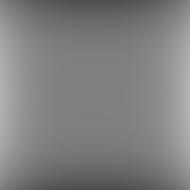}}
  {\includegraphics[width=0.15\textwidth]{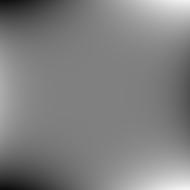}}
  {\includegraphics[width=0.15\textwidth]{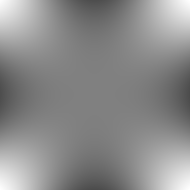}}
  {\includegraphics[width=0.15\textwidth]{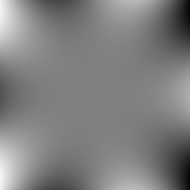}}

\caption{The 36 first spherical basis functions corresponding to $\tilde{n} = 5$. For $\tilde n=2$ only the 9 first are used.}
  \label{fig:4}
\end{figure}

\subsection{Quality measures and parameter selection}

All algorithms have been implemented in MATLAB, and the test hardware is an
Intel Core i7-6700 HQ CPU 2.60GHz with 8GB RAM.
We evaluate the results of the proposed approach in terms of the peak signal-to-noise (PSNR) that is available in the image processing toolbox in MATLAB and the Structural SIMilarity (SSIM) given in \cite{wang2004ssim}.
In the computation of the spherical basis function ${f_l^{+}}$, we use the Larmour frequency $\omega = 42.58$.
The conductivity $\sigma$ and the dielectric permittivity $\varepsilon$ are the optimal $(\sigma, \varepsilon)$ for the heterogeneous model in \cite{SbHoLaLuVa} with $\sigma = 0.6, \varepsilon = 50$.
The magnetic permeability for water is $\mu = 1.2566\times 10^{-6}$.

\subsection{Numerical reconstructions and comparison between \eqref{eq:9} and \eqref{eq:variational}}
We perform experiments with the noise level $\tilde{\sigma} = 0.05$.
We initialize the \cref{alg:buildtree1} with $u^0 = 0 \in \varmathbb{R}^{N^2}$, $a^0 = (1, \ldots, 1) \in {\varmathbb{R}^{Jl_{\max}^{(n)}}}$, and take as step lengths $\tau_v^k = 1/8$, $\tau_q^k = 23$, and $\delta = 1/24$ and the remaining $v^0$, $\lambda^0$, $\bar{\lambda}^0$ are all also initialized to zero in the two algorithms. For numerical reconstruction corresponding to \eqref{eq:variational}, we use the codes from \cite{BeKnScVa}, available from \cite{BeKnScVa-codes}.

We take as regularization parameters $\alpha_j = 0.4018$, ($j=1,\ldots,J$), $\alpha_0 = 0.0062$ and $\alpha =0.2149$.
We perform a fixed number of iterations of \cref{alg:buildtree1}.
With $\tilde{n} = 2$, i.e., $l_{\max} = 9$, stopping after 1000, 1200, and 1500 iterations, the reconstruction results for the model \eqref{eq:9} are shown in \cref{fig:5}. We also perform the numerical simulations with $\tilde{n} = 5$, and the same number of iterations 1000, 1200, and 1500. The reconstruction results for the model \eqref{eq:9} are shown in \cref{fig:6}, and for the model \eqref{eq:variational} in \cref{fig:11}. In \cref{tab:table6} we report the PNSR and SSIM \cite{wang2004ssim} values. From these results we can observe that the reconstruction quality of the model \eqref{eq:9} is much better than the model \eqref{eq:variational} when iterations are 1000, 1200, and 1500. The reconstruction results can be further improved due to the regularization parameters $\alpha$ not being optimally chosen; for a truly fair comparison of the potential of the two models distinct, parameter learning strategies should be used \cite{delosreyes2014learning,tuomov-tgvlearn}. What we can with reasonable confidence say based on our experiments here is that the non-linear ADMM converges faster for the model \eqref{eq:9} than for \eqref{eq:variational}. This is important in practical applications.

We also report the absolute values of the coefficients $a_l^{(j)}$ in \cref{tab:table7} when the stopping number of iterations is 1500 for $\tilde{n} = 2$.  Similarly, \cref{tab:table8} shows the absolute values for the resulted coefficients $a_l^{(j)}$ for $\tilde{n} = 5$ when we stop at 1500 iterations.
While for $\tilde{n}=2$ the last rows of the coefficient pyramid for each coil still have high coefficient values, for $\tilde{n}=5$ the coefficients on the last row have decayed to below 1\% of the main coefficient on the first row; often 0.1\% or less. This supports our starting intuition that a sparse approximation of the coil sensitivities with relatively few coefficients is sufficient for a high-quality reconstruction.

Using the discovered coefficients $a_l^{(j)}$, and the known spherical basis functions, we can reconstruct the approximation of the coil sensitivities $c_j$. These are in \cref{fig:7} and \cref{fig:75} for $\tilde{n} = 2$ and $\tilde{n} = 5$ with 1500 iterations. In order to do the further comparison between \eqref{eq:9} and \eqref{eq:variational}, we also give the approximation of $c_j$ with 1500 iterations for \eqref{eq:variational} in \cref{fig:79}. The PSNR and SSIM values for reconstruction of coils using \eqref{eq:9} and \eqref{eq:variational} for 1500 iterations are reported in \cref{tab:table9}. Visually, the coil sensitivities constructed with our model \eqref{eq:variational} are significantly smoother than those constructed with the model \eqref{eq:9}, and indeed appear to very well approximate the ``true'' coil sensitivities in \cref{fig:2}.

To test robustness, we show in \cref{fig:9,fig:12} for $\tilde n=2$ and $\tilde n=5$, respectively, the reconstructions results for the alternative coil sensitivity maps in \cref{fig:8}. The number of iterations is 1500. The PSNR and SSIM values are reported in \cref{tab:table10}. Comparing to \cref{fig:5,fig:6,tab:table6}, we can see that the results remain stable under this perturbation of coil sensitivities, being virtually identical. By contrast, the reconstructed coil sensitivities in \cref{fig:10,fig:105} have changed, corresponding to the change in true coil sensitivities.

\begin{figure}[tbp]
    \centering
    \subcaptionbox{1200 iterations}{\label{fig:5a}\includegraphics[width=0.3\textwidth]{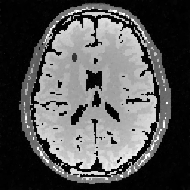}}
    \ %
    \subcaptionbox{1500 iterations}{\label{fig:5b}\includegraphics[width=0.3\textwidth]{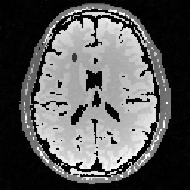}}
    \ %
    \subcaptionbox{1800 iterations}{\label{fig:5c}\includegraphics[width=0.3\textwidth]{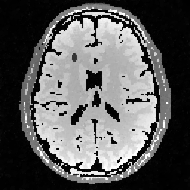}}
    \caption { Reconstructed brain slice using \eqref{eq:9} and $\tilde{n} = 2$.
    }
    \label{fig:5}
\end{figure}

\begin{figure}[tbp]
    \centering
    \subcaptionbox{1200 iterations}{\label{fig:6a}\includegraphics[width=0.3\textwidth]{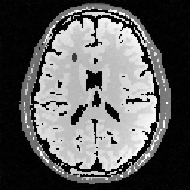}}
    \ %
    \subcaptionbox{1500 iterations}{\label{fig:6b}\includegraphics[width=0.3\textwidth]{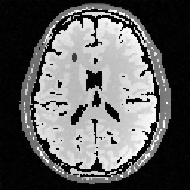}}
    \ %
    \subcaptionbox{1800 iterations}{\label{fig:6c}\includegraphics[width=0.3\textwidth]{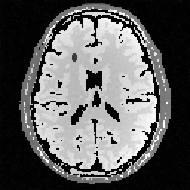}}
    \caption { Reconstructed brain slice using \eqref{eq:9} and $\tilde{n} = 5$. 
    }
    \label{fig:6}
\end{figure}

\begin{figure}[tbp]
    \centering
    \subcaptionbox{1200 iterations}{\label{fig:11a}\includegraphics[width=0.3\textwidth]{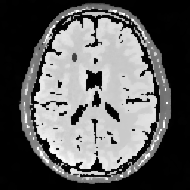}}
    \ %
    \subcaptionbox{1500 iterations}{\label{fig:11b}\includegraphics[width=0.3\textwidth]{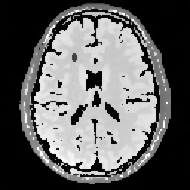}}
    \ %
    \subcaptionbox{1800 iterations}{\label{fig:11c}\includegraphics[width=0.3\textwidth]{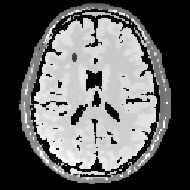}}
    \caption { Reconstructed brain slice using \eqref{eq:variational}.
    }
    \label{fig:11}
\end{figure}

\begin{table}[tbp]
  \caption{Reconstruction quality comparison between \eqref{eq:9} with $\tilde{n} = 2, 5$ and \eqref{eq:variational}.}
  \label{tab:table6}
  \centering
  \begin{tabular}{|c|c|c|c|} \hline
  Method &  stopping itr. k  & PSNR(dB) & SSIM  \\ \hline
   &  1200 & 25.2750 & 0.9996 \\
  using \eqref{eq:9} with $\tilde{n} = 2$   & 1500 & 25.6878 & 0.9997 \\
  & 1800 &  25.1069 & 0.9996 \\  \hline
  &  1200 & 26.0725 & 0.9997 \\
  using \eqref{eq:9} with $\tilde{n} = 5$   & 1500 & 25.5883 & 0.9997\\
  & 1800 &  25.8730 & 0.9997\\  \hline
  &  1200 & 24.7173 & 0.9996\\
  using \eqref{eq:variational}   & 1500 & 24.1524 & 0.9996\\
  & 1800 &  23.6702 & 0.9995\\  \hline
  \end{tabular}
\end{table}

\begin{table}[tbp]
  \caption{The absolute values of coefficients of $f_l^{+}$ with $\tilde{n} = 2$ for 1500 iterations. j is the ordinal number of coils.}
  \label{tab:table7}
  \centering\small
  \begin{tabular}{|c|c|} \hline
j  & $f_1^{+}$ \\
   & $f_2^{+}, f_3^{+}, f_4^{+}$ \\
   & $f_5^{+}, f_6^{+}, f_7^{+}, f_8^{+}, f_9^{+}$ \\ \hline
   & 5.4801\\
  1 & 0.9382, 0.0092, 4.7704 \\
   & 1.0213, 0.0075, 5.3347, 0.0034, 3.8693 \\ \hline
   & 4.8840   \\
  2 & 0.3632, 0.0059, 4.7055 \\
   & 1.2463, 0.0085, 4.6682, 0.0083, 4.3803 \\ \hline
   & 3.4124  \\
  3 & 0.2777, 0.0058, 4.3643 \\
   & 1.7627, 0.0072, 2.2884, 0.0066, 4.1581 \\ \hline
   & 2.6890 \\
  4 & 0.1031, 0.0094, 4.2809 \\
   & 1.4676, 0.0065, 2.5603, 0.0058, 3.7494 \\ \hline
  \end{tabular}\ %
  \begin{tabular}{|c|c|} \hline
j  & $f_1^{+}$ \\
   & $f_2^{+}, f_3^{+}, f_4^{+}$ \\
   & $f_5^{+}, f_6^{+}, f_7^{+}, f_8^{+}, f_9^{+}$ \\ \hline
    & 3.4487 \\
  5 & 0.2859, 0.0096, 3.6825 \\
   & 1.1578, 0.0075, 3.1660, 0.0059, 3.2907 \\ \hline
   & 4.3978 \\
  6 & 0.1909, 0.0085, 3.4326  \\
   & 0.6340, 0.0060, 3.7131, 0.0048, 3.2575 \\ \hline
   & 3.7714   \\
  7 & 0.2415, 0.0016, 3.0790  \\
   & 1.4061, 0.0070, 4.1169, 0.0045, 3.8595 \\ \hline
   & 3.3760  \\
  8 & 0.2552, 0.0065, 3.2423  \\
   & 1.6636, 0.0093, 2.7457, 0.0075, 3.7089  \\ \hline
\end{tabular}
\end{table}

\makeatletter
\newcommand*{\centerfloat}{%
  \parindent \z@
  \leftskip \z@ \@plus 1fil \@minus \textwidth
  \rightskip\leftskip
  \parfillskip \z@skip}
\makeatother

\begin{sidewaystable}[tbp]
  \caption{The absolute values of coefficients of $f_l^{+}$ with $\tilde{n} = 5$ for 1500 iterations. j is the ordinal number of coils.}
  \label{tab:table8}
  \centering\small
  \begin{tabular}{|c|c|} \hline
   &  $f_1^{+}$ \\
   & $f_2^{+}, f_3^{+}, f_4^{+}$ \\
j  & $f_5^{+}, f_6^{+}, f_7^{+}, f_8^{+}, f_9^{+}$\\
   & $f_{10}^{+}, f_{11}^{+}, f_{12}^{+}, f_{13}^{+}, f_{14}^{+}, f_{15}^{+}, f_{16}^{+}$\\
   & $f_{17}^{+}, f_{18}^{+}, f_{19}^{+}, f_{20}^{+}, f_{21}^{+}, f_{22}^{+}, f_{23}^{+}, f_{24}^{+}, f_{25}^{+}$\\
   & $f_{26}^{+}, f_{27}^{+}, f_{28}^{+}, f_{29}^{+}, f_{30}^{+}, f_{31}^{+}, f_{32}^{+}, f_{33}^{+}, f_{34}^{+}, f_{35}^{+}, f_{36}^{+}$\\  \hline
   & \scriptsize{4.9921} \\
   & \scriptsize{.6829, .0438, 4.3006}\\
1  & \scriptsize{.8681,.0165, 5.4029,.0174, 3.9672}\\
   & \scriptsize{.9972, .0101, 1.9630, .0107, 1.5569, .0051, 5.2987}\\
   & \scriptsize{.0767, .0075, .0324, .0087, .0344, .0344, .0089, .0135, 5.3953}\\
   & \scriptsize{.0009, .0085, .0078, .0054, .0242, .0053, .0490, .0074, .0038, .0033, .0069}\\ \hline
   & \scriptsize{4.4692} \\
   & \scriptsize{.1920, .0465, 4.2089}\\
2  & \scriptsize{1.2692, .0071, 4.6202, .0165, 4.1568}\\
   & \scriptsize{.9978, .0050, .0230, .0072, 1.5304, .0038, 4.2382}\\
   & \scriptsize{.6541, .0098, .0122, .0070, .0428, .0284, .0251, .0071, 5.4117}\\
   & \scriptsize{.0031, .0075, .0048, .0064, .0066, .0054, .0288, .0059, .0052, .0059, .0040}\\ \hline
   & \scriptsize{3.0346} \\
   & \scriptsize{.3015, .0392, 3.7538}\\
3  & \scriptsize{1.6553, .0018, 2.9996, .0041, 4.1566}\\
   & \scriptsize{.9641, .0057, .3390, .0085, 2.2158, .0087, 3.9728}\\
   & \scriptsize{.0021, .0080, .0025, .0139, .0255, .0333, .0106, .0030, 4.4230}\\
   & \scriptsize{.0013, .0075, .0064, .0072, .0030, .0075, .0261, .0075, .0059, .0073, .0048}\\ \hline
   & \scriptsize{2.4995} \\
   & \scriptsize{.1197, .0360, 3.3852}\\
4  & \scriptsize{1.4643, .0034, 2.4634, .0056, 3.4846}\\
   & \scriptsize{1.0066, .0118, .1241, .0061, 1.5655, .0049, 4.3484}\\
   & \scriptsize{.9035, .0058, .0011, .0142, .0158, .0383, .0126, .0048, 5.1540}\\
   & \scriptsize{.0043, .0079, .0081, .0076, .0034, .0074, .0268, .0071, .0078, .0080, .0095}\\ \hline
\end{tabular}\ %
  \begin{tabular}{|c|c|} \hline
   &  $f_1^{+}$ \\
   & $f_2^{+}, f_3^{+}, f_4^{+}$ \\
j  & $f_5^{+}, f_6^{+}, f_7^{+}, f_8^{+}, f_9^{+}$\\
   & $f_{10}^{+}, f_{11}^{+}, f_{12}^{+}, f_{13}^{+}, f_{14}^{+}, f_{15}^{+}, f_{16}^{+}$\\
   & $f_{17}^{+}, f_{18}^{+}, f_{19}^{+}, f_{20}^{+}, f_{21}^{+}, f_{22}^{+}, f_{23}^{+}, f_{24}^{+}, f_{25}^{+}$\\
   & $f_{26}^{+}, f_{27}^{+}, f_{28}^{+}, f_{29}^{+}, f_{30}^{+}, f_{31}^{+}, f_{32}^{+}, f_{33}^{+}, f_{34}^{+}, f_{35}^{+}, f_{36}^{+}$\\  \hline
   & \scriptsize{3.1244} \\
   & \scriptsize{.2371, .0362, 3.0439}\\
5  & \scriptsize{.9841, .0108, 3.2107, .0073, 2.8668}\\
   & \scriptsize{.8269, .0041, .0164, .0065, .5001, .0153, 4.3171}\\
   & \scriptsize{.0811, .0057, .0055, .0057, .0239, .0266, .0237, .0025, 4.6823}\\
   & \scriptsize{.0025, .0061, .0072, .0077, .0125, .0088, .0353, .0091, .0052, .0111, .0058}\\ \hline
   & \scriptsize{3.9430} \\
   & \scriptsize{.3079, .0451, 3.1557}\\
6  & \scriptsize{.6159, .0090, 4.0539, .0148, 2.8496}\\
   & \scriptsize{1.3257, .0072, .0080, .0128, 1.5127, .0052, 3.1355}\\
   & \scriptsize{.3157, .0061, .0212, .0080, .0327, .0422, .0358, .0122, 4.5738}\\
   & \scriptsize{.0047, .0071, .0085, .0083, .0087, .0076, .0341, .0062, .0099, .0074, .0064}\\ \hline
   & \scriptsize{3.4755} \\
   & \scriptsize{.2479, .0338, 2.9170}\\
7  & \scriptsize{1.3750, .0061, 3.7187, .0110, 3.4468}\\
   & \scriptsize{.8354, .0115, .1450, .0138, 1.4890, .0046, 3.9267}\\
   & \scriptsize{.0038, .0074, .0169, .0107, .0299, .0389, .0244, .0157, 4.2940}\\
   & \scriptsize{.0028, .0078, .0054, .0075, .0021, .0071, .0275, .0055, .0043, .0068, .0051}\\ \hline
   & \scriptsize{2.9650} \\
   & \scriptsize{.2303, .0268, 2.9961}\\
8  & \scriptsize{1.5204, .0118, 3.1554, .0206, 3.4432}\\
   & \scriptsize{1.3120, .0106, .1190, .0056, 1.3044, .0064, 4.4748}\\
   & \scriptsize{.6673, .0071, .0079, .0051, .0222, .0196, .0165, .0123, 4.9405}\\
   & \scriptsize{.0031, .0080, .0065, .0070, .0114, .0076, .0177, .0087, .0153, .0102, .0076}\\ \hline
\end{tabular}
\end{sidewaystable}

\begin{figure}
  \centering
  {\includegraphics[width=0.20\textwidth]{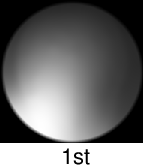}}
  {\includegraphics[width=0.20\textwidth]{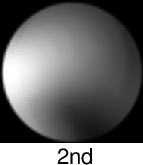}}
  {\includegraphics[width=0.20\textwidth]{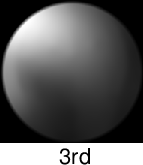}}
  {\includegraphics[width=0.20\textwidth]{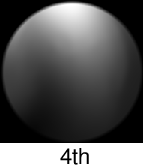}}\\
  {\includegraphics[width=0.20\textwidth]{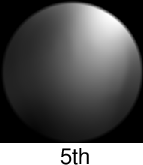}}
  {\includegraphics[width=0.20\textwidth]{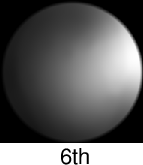}}
  {\includegraphics[width=0.20\textwidth]{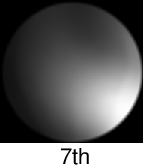}}
  {\includegraphics[width=0.20\textwidth]{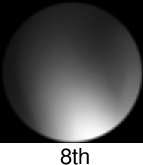}}
  \caption{ Coil sensitivities generated by the measurements of a water bottle with 8-channel head coil array.}
  \label{fig:2}
\end{figure}

\begin{figure}
  \centering
  {\includegraphics[width=0.20\textwidth]{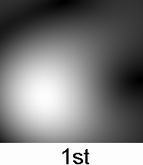}}
  {\includegraphics[width=0.20\textwidth]{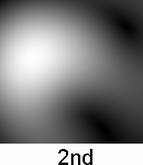}}
  {\includegraphics[width=0.20\textwidth]{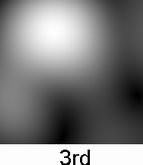}}
  {\includegraphics[width=0.20\textwidth]{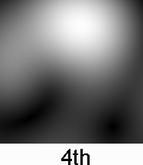}}\\
  {\includegraphics[width=0.20\textwidth]{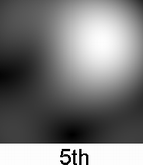}}
  {\includegraphics[width=0.20\textwidth]{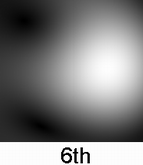}}
  {\includegraphics[width=0.20\textwidth]{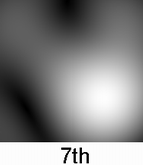}}
  {\includegraphics[width=0.20\textwidth]{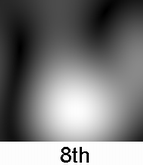}}
  \caption{ Reconstructed coil sensitivities for the 8 different coils when the number of iterations is 1500 for $\tilde{n} = 2$.}
  \label{fig:7}
\end{figure}

\begin{figure}
  \centering
  {\includegraphics[width=0.20\textwidth]{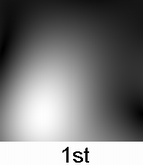}}
  {\includegraphics[width=0.20\textwidth]{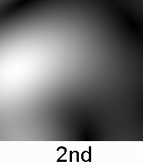}}
  {\includegraphics[width=0.20\textwidth]{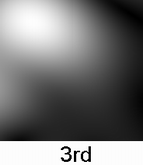}}
  {\includegraphics[width=0.20\textwidth]{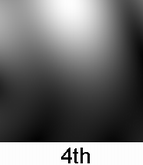}}\\
  {\includegraphics[width=0.20\textwidth]{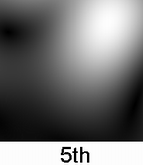}}
  {\includegraphics[width=0.20\textwidth]{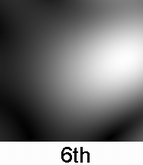}}
  {\includegraphics[width=0.20\textwidth]{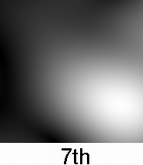}}
  {\includegraphics[width=0.20\textwidth]{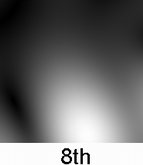}}
  \caption{ Reconstructed coil sensitivities for the 8 different coils when the number of iterations is 1500 for $\tilde{n} = 5$.}
  \label{fig:75}
\end{figure}

\begin{figure}
  \centering
  {\includegraphics[width=0.20\textwidth]{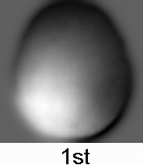}}
  {\includegraphics[width=0.20\textwidth]{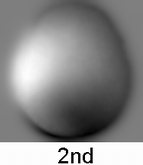}}
  {\includegraphics[width=0.20\textwidth]{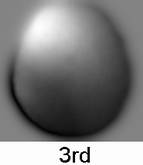}}
  {\includegraphics[width=0.20\textwidth]{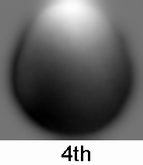}}\\
  {\includegraphics[width=0.20\textwidth]{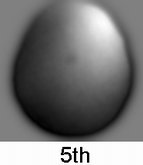}}
  {\includegraphics[width=0.20\textwidth]{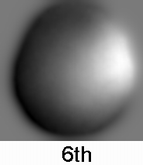}}
  {\includegraphics[width=0.20\textwidth]{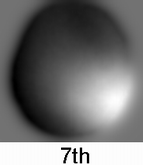}}
  {\includegraphics[width=0.20\textwidth]{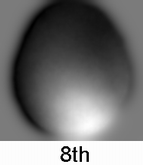}}
  \caption{ Reconstructed coil sensitivities for the 8 different coils when the number of iterations is 1500 for \eqref{eq:variational}.}
  \label{fig:79}
\end{figure}

\begin{figure}
    \centering
    \subcaptionbox{1200 iterations}{\label{fig:9a}\includegraphics[width=0.3\textwidth]{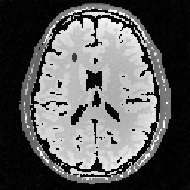}}
    \ %
    \subcaptionbox{1500 iterations}{\label{fig:9b}\includegraphics[width=0.3\textwidth]{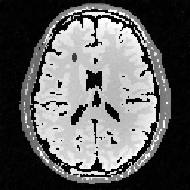}}
    \ %
    \subcaptionbox{1800 iterations}{\label{fig:9c}\includegraphics[width=0.3\textwidth]{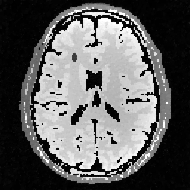}}
    \caption{ Reconstruction when data is generated with the alternative coil sensitivities from \cref{fig:8} with $\tilde{n} = 2$.}
    \label{fig:9}
\end{figure}

\begin{figure}
    \centering
    \subcaptionbox{1200 iterations}{\label{fig:12a}\includegraphics[width=0.3\textwidth]{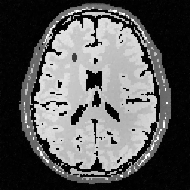}}
    \ %
    \subcaptionbox{1500 iterations}{\label{fig:12b}\includegraphics[width=0.3\textwidth]{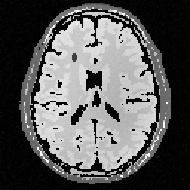}}
    \ %
    \subcaptionbox{1800 iterations}{\label{fig:12c}\includegraphics[width=0.3\textwidth]{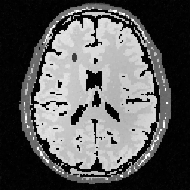}}
    \caption{ Reconstruction when data is generated with the alternative coil sensitivities from \cref{fig:8} with $\tilde{n} = 5$.}
  \label{fig:12}
\end{figure}

\begin{table}[tbp]
  \caption{Reconstruction quality comparison between \eqref{eq:9} with $\tilde{n} = 2, 5$ and \eqref{eq:variational} for 1500 iterations.}
  \label{tab:table9}
  \centering
  \begin{tabular}{|c|c|c|c|} \hline
  Method &  coil No.  & PSNR(dB) & SSIM  \\ \hline
  &  coil 1 & 8.5038 & 0.9864 \\
  &  coil 2 & 8.9329 & 0.9878 \\
  &  coil 3 & 8.9895 & 0.9873 \\
  &  coil 4 & 10.7526 & 0.9918 \\
  using \eqref{eq:9} with $\tilde{n} = 2$   &  coil 5 & 10.2633 & 0.9900 \\
  &  coil 6 & 10.6595 & 0.9917 \\
  &  coil 7 & 9.0644 & 0.9869 \\
  &  coil 8 &  11.2573 & 0.9929 \\  \hline
  &  coil 1 & 7.2807 & 0.9849 \\
  &  coil 2 & 8.2247 & 0.9878 \\
  &  coil 3 & 7.4537 & 0.9850 \\
  &  coil 4 & 9.3979 & 0.9903 \\
  using \eqref{eq:9} with $\tilde{n} = 5$   &  coil 5 & 9.1837 & 0.9891 \\
  &  coil 6 & 10.3233 & 0.9919 \\
  &  coil 7 & 8.5967 & 0.9876 \\
  &  coil 8 &  10.1097 & 0.9920 \\  \hline
  &  coil 1 & 5.9826 & 0.9759 \\
  &  coil 2 & 5.9328 & 0.9754 \\
  &  coil 3 & 5.9124 & 0.9748 \\
  &  coil 4 & 6.3114 & 0.9779 \\
  using \eqref{eq:variational}   &  coil 5 & 6.1844 & 0.9762 \\
  &  coil 6 & 6.1754 & 0.9765 \\
  &  coil 7 & 6.3423 & 0.9772 \\
  &  coil 8 &  6.4319 & 0.9774 \\  \hline
  \end{tabular}
\end{table}

\begin{table}[tbp]
  \caption{The PSNR and SSIM values for reconstruction with alternative coil sensitivities using \eqref{eq:9} with $\tilde{n} = 2$ and $\tilde{n} = 5$}
  \label{tab:table10}
  \centering
  \begin{tabular}{|c|c|c|c|} \hline
  Method &  stopping itr. k  & PSNR(dB) & SSIM  \\ \hline
   &  1200 & 27.0202 & 0.9998\\
  using \eqref{eq:9} with $\tilde{n} = 2$   & 1500 & 24.2239 & 0.9995 \\
  & 1800 &  24.9633 & 0.9996\\  \hline
  &  1200 & 30.8382 & 0.9999 \\
  using \eqref{eq:9} with $\tilde{n} = 5$   & 1500 & 30.7620 & 0.9999\\
  & 1800 &  30.6448 & 0.9999\\  \hline
\end{tabular}
\end{table}

\begin{figure}
  \centering
  {\includegraphics[width=0.20\textwidth]{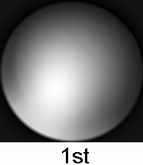}}
  {\includegraphics[width=0.20\textwidth]{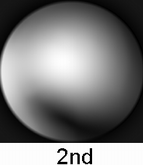}}
  {\includegraphics[width=0.20\textwidth]{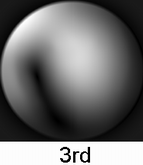}}
  {\includegraphics[width=0.20\textwidth]{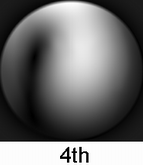}}\\
  {\includegraphics[width=0.20\textwidth]{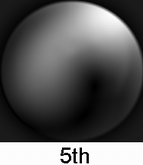}}
  {\includegraphics[width=0.20\textwidth]{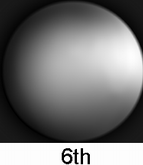}}
  {\includegraphics[width=0.20\textwidth]{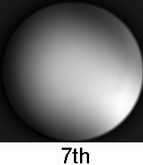}}
  {\includegraphics[width=0.20\textwidth]{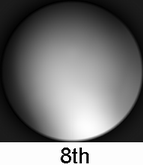}}
  \caption{ Alternative Perturbed coil sensitivity maps. }
  \label{fig:8}
\end{figure}

\begin{figure}
  \centering
  {\includegraphics[width=0.20\textwidth]{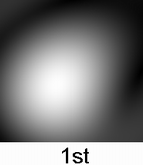}}
  {\includegraphics[width=0.20\textwidth]{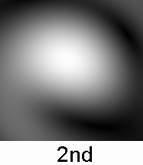}}
  {\includegraphics[width=0.20\textwidth]{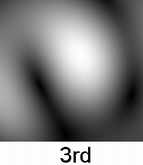}}
  {\includegraphics[width=0.20\textwidth]{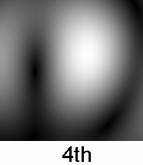}}\\
  {\includegraphics[width=0.20\textwidth]{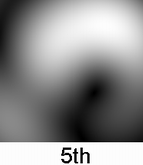}}
  {\includegraphics[width=0.20\textwidth]{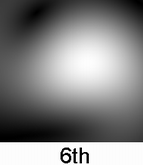}}
  {\includegraphics[width=0.20\textwidth]{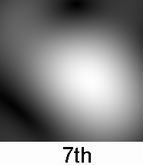}}
  {\includegraphics[width=0.20\textwidth]{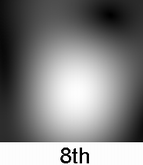}}
  \caption{ Reconstructed coil sensitivities when data is generated with the alternative coil sensitivities from \cref{fig:8}. The number of iterations is 1500 and $\tilde{n} = 2$.}
  \label{fig:10}
\end{figure}

\begin{figure}
  \centering
  {\includegraphics[width=0.20\textwidth]{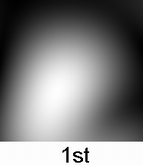}}
  {\includegraphics[width=0.20\textwidth]{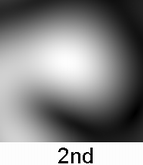}}
  {\includegraphics[width=0.20\textwidth]{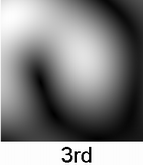}}
  {\includegraphics[width=0.20\textwidth]{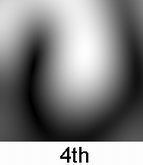}}\\
  {\includegraphics[width=0.20\textwidth]{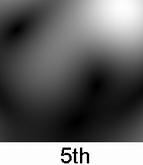}}
  {\includegraphics[width=0.20\textwidth]{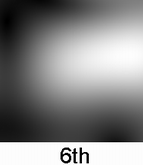}}
  {\includegraphics[width=0.20\textwidth]{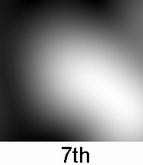}}
  {\includegraphics[width=0.20\textwidth]{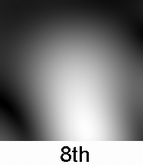}}
  \caption{ Reconstructed coil sensitivities when data is generated with the alternative coil sensitivities from \cref{fig:8}. The number of iterations is 1500 and $\tilde{n} = 5$.}
  \label{fig:105}
\end{figure}

\section{Conclusions}
\label{sec:conclusions}

In this paper, we have established a new model for parallel MRI reconstruction based on sparse regularization of coil sensitivities in spherical basis function bases. We have developed efficient recurrence formulas for the computation of these functions. We have then applied the non-linear ADMM from \cite{BeKnScVa} to numerically solve our model \eqref{eq:9}. By numerical reconstructions and comparison between \eqref{eq:9} and \eqref{eq:variational}, we think that the reconstruction quality for proposed model \eqref{eq:9} is better than the model \eqref{eq:variational}. In additional, the reconstruction for our model \eqref{eq:9} for the alternative coils sensitivity maps is very robust. That has an important significance in practical applications. In the future, we will study the optimal choice among the regularization parameters $\alpha_j$, $\alpha_0$, and $\alpha$ to improve reconstruction quality furthermore via parameter learning strategies in \cite{delosreyes2014learning,tuomov-tgvlearn}.

\section*{Acknowledgments}

Y.~Zhu was supported by the National Natural Science Foundation of China No.~11571325, Science Research Project of CUC No.~3132016XNL1612.
Towards the end of this research, T.~Valkonen has been supported by the EPSRC First Grant EP/P021298/1, ``PARTIAL Analysis of Relations in Tasks of Inversion for Algorithmic Leverage''.

We would like to thank Florian Knoll for the water bottle measurement, and Martin Benning for making his codes \cite{BeKnScVa-codes} available.

\section*{\texorpdfstring{\normalsize}{}A data statement for the EPSRC}

{\color{red}All data and source codes will be publicly deposited when the final accepted version of the manuscript is submitted.}

\printbibliography

\end{document}